\documentclass[11pt]{amsart}
\usepackage{graphicx}
\usepackage{amsmath,amsthm}
\usepackage{amssymb}
\usepackage{pinlabel}
\usepackage[left = 3cm, right = 3 cm , top = 3 cm , bottom = 3cm ]{geometry}
\usepackage[utf8]{inputenc}
\usepackage{todonotes}
\usepackage{enumerate}

\newtheorem{thm}{Theorem}[section]
\newtheorem{prop}[thm]{Proposition}
\newtheorem{lem}[thm]{Lemma}

\newtheorem{conj}[thm]{Conjecture}

\newtheorem{question}[thm]{Question}

\newtheorem{notation}[thm]{Notation}
\newcommand{\floor}[1]{\lfloor #1 \rfloor}

\theoremstyle{remark}
\newtheorem{remark}[thm]{Remark}

\newtheorem{ex}[thm]{Example}

\theoremstyle{definition}
\newtheorem{definition}[thm]{Definition}

\begin{document}
	
	\title[Non-negative integral matrices with given spectral radius]{Non-negative integral matrices with given spectral radius and controlled dimension}
	\author{Mehdi Yazdi}
%
%
%
	\maketitle
	
	\begin{abstract}
	A celebrated theorem of Douglas Lind states that a positive real number is equal to the spectral radius of some integral primitive matrix, if and only if, it is a Perron algebraic integer. Given a Perron number $p$, we prove that there is an integral irreducible matrix with spectral radius $p$, and with dimension bounded above in terms of the algebraic degree, the ratio of the first two largest Galois conjugates, and arithmetic information about the ring of integers of its number field. This arithmetic information can be taken to be either the discriminant or the minimal Hermite-like thickness. Equivalently, given a Perron number $p$, there is an irreducible shift of finite type with entropy $\log(p)$ defined as an edge shift on a graph whose number of vertices is bounded above in terms of the aforementioned data.
	\end{abstract}

\section{Introduction}

A real algebraic integer $\lambda \geq 1$ is \emph{Perron} if $\lambda$ is strictly larger than all its other Galois conjugates in absolute value. In what follows, all matrices are considered to be square size. For a real matrix $A$, by $A>0$ we mean that all entries of $A$ are positive. A non-negative real matrix $A$ is \emph{primitive} (or \emph{aperiodic}) if there is some natural number $k$ such that $A^k>0$. A non-negative real matrix is \emph{irreducible} if for any two indices $i$ and $j$, there is some natural number $k=k(i,j)$ such that $(A^k)_{ij}>0$. The Perron--Frobenius theorem implies that
\begin{enumerate}[I)]
	\item the spectral radius of any \emph{integral} primitive matrix is a Perron number; and
	\item the spectral radius of any \emph{integral} irreducible matrix is equal to the $n$th root of a Perron number where $n$ is the \emph{period} of the irreducible matrix. Moreover, the spectral radius of an integral non-negative non-nilpotent matrix is equal to the spectral radius of an integral irreducible one. 
\end{enumerate}

Lind \cite[Theorem 1]{lind1984entropies} proved a converse to the `integer' Perron--Frobenius theorem, namely for every Perron number $\lambda$ there is an integral primitive matrix with spectral radius equal to $\lambda$. It readily follows that for any Perron number $\lambda$ and any natural number $n$, there is an integral irreducible matrix with spectral radius equal to $\sqrt[n]{\lambda}$; see \cite[Theorem 3]{lind1984entropies}.

Associated to a non-negative and non-degenerate (i.e. with no zero rows or columns) integral matrix $A = [a_{ij}]$ with spectral radius $\lambda$ is a \emph{shift of finite type} with entropy equal to $\log(\lambda)$, which is defined as the \emph{edge shift} on a directed finite graph $G_A$ as follows. The graph $G_A$ has one vertex for each row of the matrix $A$, and there are exactly $a_{ij}$ oriented edges from the vertex $v_i$ to the vertex $v_j$. In particular, the dimension of the matrix $A$ (i.e. its number of rows or columns) is equal to the number of vertices of the graph $G_A$. The matrix is primitive if and only if the associated shift of finite type is topologically mixing. If the matrix is irreducible then the corresponding shift of finite type is called \emph{irreducible}. Irreducible shifts of finite type are those which are topologically transitive. See e.g. \cite{lind_marcus_2021}. 

Given a Perron number $\lambda$, the \emph{Perron--Frobenius degree} of $\lambda$, $d_{PF}(\lambda)$, is the least dimension of an integral \emph{primitive} matrix with spectral radius equal to $\lambda$. Clearly we have 
\[ d_{PF}(\lambda) \geq d, \]
where $d$ denotes the algebraic degree of $\lambda$ as an algebraic integer. It is easy to see that the equality happens if $\lambda$ is quadratic (see \cite[Remark 3.1]{yazdi2020lower}), so we consider the case of $d \geq 3$. Lind observed that if the \emph{trace} (i.e. sum of Galois conjugates) of $\lambda$ is negative, then $d_{PF}(\lambda)$ is strictly larger than $d$; see \cite[page 289]{lind1984entropies}. In \cite{yazdi2020lower}, using an idea of Lind, we gave a lower bound for $d_{PF}(\lambda)$ in terms of the layout of the two largest (in absolute value) Galois conjugates of $\lambda$ in the complex plane. As a corollary, it was shown that there are examples of cubic Perron numbers with arbitrarily large Perron--Frobenius degrees, a result previously known to Lind, McMullen, and Thurston, although unpublished. 

\begin{definition}
Given a Perron number $\lambda$, define the \emph{spectral ratio} of $\lambda$ as $\max_i \frac{|\lambda_i|}{\lambda}$, where $\lambda_i \neq \lambda$ are the remaining Galois conjugates of $\lambda$.
\end{definition}

\begin{notation}
For a Perron number $\lambda$, let $d_{PF}^{irr}(\lambda)$ be the smallest dimension of an integral \emph{irreducible} matrix with spectral radius $\lambda$. 
\end{notation}

In this paper, we give an explicit upper bound for $d_{PF}^{irr}(\lambda)$ in terms of the algebraic degree of $\lambda$, the spectral ratio of $\lambda$, and arithmetic information about the ring of integers $\mathcal{O}_\mathbb{K}$ of the number field $\mathbb{K}:= \mathbb{Q}(\lambda)$. This arithmetic quantity, which we call the \emph{minimal Hermite-like thickness} and denote it by $\tau_{\min}(\mathcal{O}_\mathbb{K})$, was previously defined by Bayer Fluckiger \cite{BAYERFLUCKIGER2006305} in relation to \emph{Minkowski's conjecture}; see Definition \ref{def:tau-min}. Intuitively, once an inner product is chosen on $\mathbb{R}^d$, the Hermite-like thickness is defined as the square of the covering radius, normalised properly, for the inclusion of the lattice $\mathcal{O}_\mathbb{K}$ in $\mathbb{R}^d$. The minimal Hermite-like thickness is then defined by taking the infimum of Hermite-like thickness over an appropriate space of inner products on $\mathbb{R}^d$. As a corollary of our main result, and using an inequality due to Banaszczyk and Bayer Fluckiger (see inequality (\ref{upper-bound-tau})), we obtain a similar bound in terms of the \emph{discriminant} $D_\mathbb{K}$ of $\mathcal{O}_{\mathbb{K}}$ instead of $\tau_{\min}(\mathcal{O}_\mathbb{K})$. See Definition \ref{def:discriminant}.

\begin{thm}
	\label{thm:upper-bound}
	Let $\lambda$ be a Perron number of algebraic degree $d \geq 3$ and spectral ratio $\rho$. Set $\mathbb{K} := \mathbb{Q}(\lambda)$. Let $\mathcal{O}_\mathbb{K}$ be the ring of integers of $\mathbb{K}$, and denote the discriminant and the minimal Hermite-like thickness of $\mathbb{K}$ by, respectively, $D_\mathbb{K}$ and $\tau_{\min}(\mathcal{O}_\mathbb{K})$. Then $d_{PF}^{irr}(\lambda)$ is bounded above by each of
	\[ \Big(\dfrac{8d}{1-\rho}\Big)^{d^2} \tau_{\min}(\mathcal{O}_\mathbb{K})^\frac{d}{2} \]
	and
	\[ \Big(\dfrac{8d}{1-\rho}\Big)^{d^2} \sqrt{D_\mathbb{K}}. \]
\end{thm}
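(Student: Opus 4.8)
The plan is to realise $\lambda$ as the spectral radius of a non-negative integral matrix assembled from lattice points of $\mathcal{O}_\mathbb{K}$ lying in a thin cone about the Perron direction, and then to bound the number of lattice points needed by the covering radius of that lattice — which is exactly what the Hermite-like thickness, and through inequality (\ref{upper-bound-tau}) the discriminant, control. Embed $\mathcal{O}_\mathbb{K}$ as a full lattice $\Lambda$ in $\mathbb{K}_\mathbb{R} := \mathbb{K}\otimes_\mathbb{Q}\mathbb{R}\cong\mathbb{R}^d$, let $T$ be multiplication by $\lambda$ (so $T\Lambda\subseteq\Lambda$), and equip $\mathbb{R}^d$ with an inner product $q$ of the kind used by Bayer Fluckiger — a positive real combination of the standard forms at the archimedean places of $\mathbb{K}$ — chosen so that its Hermite-like thickness is equal to, or arbitrarily close to, $\tau_{\min}(\mathcal{O}_\mathbb{K})$. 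For any such $q$ the Perron real place gives an orthogonal, $T$-invariant splitting $\mathbb{R}^d=E_1\oplus E_{<}$ in which $T|_{E_1}$ is scaling by $\lambda$ and, as scaling is conformal and the other conjugates have absolute value at most $\rho\lambda$, $\|T|_{E_{<}}\|_q=\rho\lambda<\lambda$. Fix $\delta\in(0,1]$ and let $C_\delta$ be the circular cone about the positive Perron ray given by $C_\delta=\{x:\pi_{E_1}(x)$ lies on the positive Perron ray and $\|\pi_{E_{<}}(x)\|_q\le\delta\|\pi_{E_1}(x)\|_q\}$. Since $T$ commutes with the two projections and contracts $E_{<}$ relative to $E_1$, we get $T(C_\delta)\subseteq C_{\rho\delta}\subseteq C_\delta$, and $\sigma_1(x)>0$ for every nonzero $x\in C_\delta$, where $\sigma_1$ is the Perron embedding.

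The core is to produce a finite set $W\subset(\Lambda\cap C_\delta)\setminus\{0\}$ with $TW\subseteq\mathbb{Z}_{\ge0}\text{-span}(W)$. Let $\mu$ be the covering radius of $\Lambda$ for $q$; rounding a point at height $\Theta(\mu/\delta)$ on the Perron ray to a nearby lattice point yields a ``deep'' point $w_0\in\Lambda\cap C_{\delta/2}$ with $\|w_0\|_q=O(\mu/\delta)$. Set $R$ to be a suitable explicit multiple of $\mu/\big(\delta(1-\rho)\big)$ and put $W:=\big(\Lambda\cap C_\delta\cap\overline{B_q(0,R)}\big)\setminus\{0\}$. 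For $w\in W$ the vector $Tw$ lies in $C_{\rho\delta}$; run the greedy reduction that subtracts $w_0$ as long as the current vector has $q$-norm exceeding $R$. The choices of $R$ and $w_0$ are what make this succeed: if $\|v\|_q>R$ then $\sigma_1(v)$ is large enough compared with $\sigma_1(w_0)$ (the inequality $\sigma_1(v)\gtrsim\sigma_1(w_0)/(1-\rho)$ being exactly what $\|v\|_q>R$ secures) that $v-w_0$ still lies in $C_\delta$, while $\|v-w_0\|_q<\|v\|_q$ strictly — because $\langle v,w_0\rangle_q$ is large, both vectors hugging the Perron ray — so the process terminates at some $r_w\in\Lambda\cap C_\delta\cap\overline{B_q(0,R)}$. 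Thus $\lambda w=a_w\,w_0+r_w$ with $a_w\in\mathbb{Z}_{\ge0}$ and $r_w\in W\cup\{0\}$. I expect this geometric bookkeeping — fixing one explicit $R$ for which the reduction simultaneously remains in $C_\delta$, strictly decreases in norm, and terminates — to be the main obstacle.

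Index $W=\{w_1,\dots,w_n\}$ and let $A=[a_{ij}]$ encode the relations $\lambda w_i=\sum_j a_{ij}w_j$; it is a non-negative integral matrix. Applying $\sigma_1$ gives $Au=\lambda u$ with $u=(\sigma_1(w_1),\dots,\sigma_1(w_n))>0$, so $\rho(A)=\lambda$ by Perron--Frobenius; as $\lambda\ge1$ the matrix $A$ is non-nilpotent, so by the last assertion of item (II) of the Introduction (whose proof produces an irreducible principal submatrix) $\lambda$ is the spectral radius of an integral irreducible matrix of dimension at most $n=|W|$, and hence $d_{PF}^{irr}(\lambda)\le|W|$.

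It remains to bound $|W|$. Since $R\ge\mu$, a Voronoi-cell count gives $|W|\le\#\big(\Lambda\cap\overline{B_q(0,R)}\big)\le\omega_d(2R)^d/\mathrm{covol}_q(\Lambda)$ with $\omega_d$ the volume of the unit $q$-ball. Writing $R$ as above and using the defining relation $\mu^d=\tau(q)^{d/2}\,\mathrm{covol}_q(\Lambda)$, the covolumes cancel and $|W|\le C(d,\delta)(1-\rho)^{-d}\tau(q)^{d/2}$; choosing $\delta$ of order $1/d$, bounding $\omega_d$ and the remaining constants crudely, and letting $\tau(q)\to\tau_{\min}(\mathcal{O}_\mathbb{K})$ gives $d_{PF}^{irr}(\lambda)\le\big(8d/(1-\rho)\big)^{d^2}\tau_{\min}(\mathcal{O}_\mathbb{K})^{d/2}$ — the exponent $d^2$ is far larger than the $2d$ or so actually produced, leaving ample slack for the crude estimates. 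Substituting inequality (\ref{upper-bound-tau}), namely $\tau_{\min}(\mathcal{O}_\mathbb{K})^{d/2}\le\sqrt{D_\mathbb{K}}$, into this yields the second bound.
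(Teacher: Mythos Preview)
Your overall architecture --- embed $\mathcal{O}_\mathbb{K}$ in $\mathbb{K}_\mathbb{R}$, use a Bayer-Fluckiger inner product so that the covering radius appears, take a cone about the Perron ray, pass to an irreducible component, and cancel covolumes to isolate $\tau^{d/2}$ --- matches the paper. But the central reduction step has a genuine gap.

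The claim ``if $v\in C_\delta$ and $\|v\|_q>R$ then $v-w_0\in C_\delta$'' is false. Take $v$ on the boundary of $C_\delta$, so $\|\pi_{E_<}(v)\|=\delta\,\sigma_1(v)$, with $\pi_{E_<}(w_0)$ anti-parallel to $\pi_{E_<}(v)$. Then $\|\pi_{E_<}(v-w_0)\| = \delta\,\sigma_1(v)+\|\pi_{E_<}(w_0)\| > \delta\big(\sigma_1(v)-\sigma_1(w_0)\big)$, so $v-w_0\notin C_\delta$. Subtracting $w_0$ always decreases $\sigma_1$ but can only widen the angle with the Perron ray. Even in the idealised case where $w_0$ lies exactly on the Perron ray, the $E_<$-component is unchanged under subtraction; after $k$ steps the cone condition reads $\sigma_1(v_k)\ge\|\pi_{E_<}(Tw)\|/\delta$. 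Since $\|\pi_{E_<}(Tw)\|\le\rho\lambda\,\|\pi_{E_<}(w)\|$ can be as large as $\rho\lambda\delta R$ for $w\in W$, you are forced to stop with $\sigma_1(v_k)\ge\rho\lambda R$, while membership in $W$ demands $\|v_k\|\le R$; together these give $\rho\lambda\le1$, which fails for every Perron number with $\lambda>1/\rho$. In short, a single deep point $w_0$ cannot absorb $Tw$ back into $W$ once $\lambda$ is large, because the $E_<$-component of $Tw$ is already too big in absolute size to sit inside $C_\delta\cap B(0,R)$, and no amount of subtraction along the Perron direction fixes that.

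The paper sidesteps this by never working with a bounded piece of the cone. It builds an $M_\lambda$-invariant \emph{polytopal} cone $\mathcal{C}$ with integral extreme rays $z_1,\dots,z_k$ (perturbations of the vertices of a product of intervals and regular polygons), and proves invariance directly. Then the Gordan-type Proposition~\ref{semigroup} shows that the full additive semigroup $\mathcal{C}\cap\mathcal{O}_\mathbb{K}$ is generated by the lattice points of the parallelotope $\{\sum_i\alpha_i z_i:0\le\alpha_i\le1\}$, whose size depends only on $\ell$, $d$, $\rho$ --- not on $\lambda$. Because the entire cone is invariant, $M_\lambda$ automatically sends each generator into the semigroup, hence to a non-negative integer combination of generators; no step-by-step reduction is needed. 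Your approach can probably be salvaged either by adopting this semigroup viewpoint or by replacing the single $w_0$ with a family of subtraction points spread across $C_\delta$ (which is essentially what the $z_i$ are), but as written the greedy step does not close.
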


\begin{remark}
Given a natural number $n$ and an integral irreducible matrix $A$ with spectral radius $\lambda$, one can readily construct an integral irreducible matrix $B$ with spectral radius $\sqrt[n]{\lambda}$ such that $\dim(B) = n \dim(A)$. Hence, Theorem \ref{thm:upper-bound} can be used to give an upper bound for $d_{PF}^{irr}(\sqrt[n]{\lambda})$. See e.g. the proof of \cite[Theorem 3]{lind1984entropies}.
\end{remark}

Theorem \ref{thm:upper-bound} immediately translates into the context of irreducible shifts of finite type with a given entropy. We can derive an upper bound for the Perron--Frobenius degree using Theorem \ref{thm:upper-bound}. See also Remark \ref{primitive-case} and Question \ref{ques:primitive-vs-irreducible}. First we need to introduce a notation.

\begin{notation}
	Let $\mathbb{K}$ be a real number field (i.e. with at least one real place), and $\rho \in (0,1)$. Set 
	\[ M = 1+\frac{4}{1-\rho}, \]
	and denote
	\[ \kappa(\mathbb{K},\rho):= \max \{ d_{PF}(\alpha) \hspace{1mm} | \hspace{1mm} \alpha \in [1,M]\cap \mathbb{K} \text{ is a Perron number}   \}. \]
	
	\label{notation:kappa}
\end{notation}

Note that for any $M>1$, there are only finitely many Perron numbers in the interval $[1,M]$ with degree at most $d$. Hence, $\kappa(\mathbb{K}, \rho)$ can be computed in theory.

\begin{thm}
	\label{thm:primitive}
	Let $\mathbb{\lambda}$ be a Perron number of degree $d \geq 3$ and spectral ratio $\rho$. Set $\mathbb{K}:= \mathbb{Q}(\lambda)$. Denote the bound from Theorem \ref{thm:upper-bound} by $B(\mathbb{K}, \rho)$; note that $d=[\mathbb{K}:\mathbb{Q}]$ is uniquely determined by $\mathbb{K}$. The Perron--Frobenius degree of $\lambda$ is bounded above by 
	\[ \max \{ 2^{d^2} B(\mathbb{K}, \rho), \kappa(\mathbb{K},\rho) \}. \]
\end{thm}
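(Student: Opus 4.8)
The plan is to deduce Theorem~\ref{thm:primitive} from Theorem~\ref{thm:upper-bound} by a \emph{shift trick}, after first isolating a bounded range of $\lambda$ in which the conclusion is immediate. Write $M = 1 + \tfrac{4}{1-\rho}$ as in Notation~\ref{notation:kappa}. If $\lambda \le M$, then since $\lambda$ is a Perron number (so $\lambda \ge 1$) we have $\lambda \in [1,M]\cap\mathbb{K}$, hence $d_{PF}(\lambda) \le \kappa(\mathbb{K},\rho)$ straight from the definition of $\kappa$, and there is nothing to prove. So I would assume from now on that $\lambda > M$; in particular $\lambda > 2$ and $\lambda(1-\rho) > 2$.

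The key step is to pass to $\mu := \lambda - 1$. Its minimal polynomial is $p(x+1)$, where $p$ is the minimal polynomial of $\lambda$, so $\mu$ has algebraic degree $d \ge 3$ and $\mathbb{Q}(\mu) = \mathbb{Q}(\lambda) = \mathbb{K}$; in particular the invariants $\mathcal{O}_\mathbb{K}$, $D_\mathbb{K}$ and $\tau_{\min}(\mathcal{O}_\mathbb{K})$ are unchanged. The Galois conjugates of $\mu$ other than $\mu$ are the numbers $\lambda_i - 1$, where $\lambda_i \ne \lambda$ ranges over the conjugates of $\lambda$, and $|\lambda_i - 1| \le |\lambda_i| + 1 \le \rho\lambda + 1$. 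Since $\lambda > M = 1 + \tfrac{4}{1-\rho}$ one checks that $\rho\lambda + 1 < \lambda - 1$, so $\mu$ is again a Perron number, of spectral ratio $\rho' \le \tfrac{\rho\lambda+1}{\lambda-1}$. A short monotonicity computation --- the function $\lambda \mapsto \tfrac{\lambda-1}{\lambda(1-\rho)-2}$ is decreasing on $(M,\infty)$ and equals $\tfrac{4}{(1-\rho)(3-\rho)} < \tfrac{2}{1-\rho}$ at $\lambda = M$ --- then shows $\tfrac{1}{1-\rho'} \le \tfrac{2}{1-\rho}$. Since the arithmetic factor ($\tau_{\min}(\mathcal{O}_\mathbb{K})^{d/2}$, respectively $\sqrt{D_\mathbb{K}}$) in the bound of Theorem~\ref{thm:upper-bound} does not involve the spectral ratio, this yields $B(\mathbb{K},\rho') \le 2^{d^2}\,B(\mathbb{K},\rho)$.

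I would then apply Theorem~\ref{thm:upper-bound} to the Perron number $\mu$ to get an integral irreducible matrix $A'$ with spectral radius $\mu$ and $\dim(A') \le B(\mathbb{K},\rho') \le 2^{d^2} B(\mathbb{K},\rho)$, and set $B := A' + I$. This is an integral non-negative matrix with $\dim(B) = \dim(A')$, and since $I$ commutes with $A'$ the spectrum of $B$ is that of $A'$ translated by $1$: the Perron eigenvalue $\mu$ of $A'$ contributes $\mu + 1 = \lambda$, while any other eigenvalue $\nu$ of $A'$ satisfies $|\nu+1| < \lambda$, since either $|\nu| < \mu$ and then $|\nu+1| \le |\nu|+1 < \mu+1$, or $|\nu| = \mu$, in which case $\nu \ne \mu$ forces $\nu \notin \mathbb{R}_{\ge 0}$ and the triangle inequality $|\nu+1| < |\nu|+1 = \mu+1$ is strict. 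Hence the spectral radius of $B$ is exactly $\lambda$. Finally $B \ge A'$ entrywise, so $B$ inherits irreducibility from $A'$, and every diagonal entry of $B$ is at least $1$, so $B$ is primitive. Therefore $d_{PF}(\lambda) \le \dim(B) \le 2^{d^2} B(\mathbb{K},\rho)$, which together with the case $\lambda \le M$ gives the claimed bound.

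I expect the one genuinely delicate point to be the calibration of the threshold $M = 1 + \tfrac{4}{1-\rho}$: it must be taken large enough that $\lambda-1$ is still Perron and that the quantity $\tfrac{1}{1-\rho}$ degrades by no more than the factor $2$ that produces the (harmless) $2^{d^2}$ in the final bound. By contrast the ``$+I$'' step is robust, turning an irreducible matrix into a primitive one without disturbing the dominant eigenvalue precisely because the Perron value of $A'$ is real, positive and strictly largest in modulus; so essentially all of the work is in the spectral-ratio estimate of the second paragraph.
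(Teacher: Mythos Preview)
Your argument is correct and follows essentially the same route as the paper's proof: split off the range $\lambda\le M$ (handled by $\kappa(\mathbb{K},\rho)$), show that $\lambda-1$ is Perron with $\frac{1}{1-\rho'}\le\frac{2}{1-\rho}$, apply Theorem~\ref{thm:upper-bound} to $\lambda-1$, and then add the identity to obtain a primitive matrix with spectral radius $\lambda$. Your monotonicity computation for the spectral-ratio degradation and your explicit verification that $A'+I$ is primitive with spectral radius exactly $\lambda$ are slightly more detailed than the paper's presentation, but the underlying ideas and the resulting bound are identical.
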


\subsection{Previous work} In \cite{lind1984entropies}, Lind gave a method for producing \emph{all} integral primitive matrices with a given Perron number $\lambda$ as their spectral radius. Let $B \colon \mathbb{R}^d \rightarrow \mathbb{R}^d$ be the \emph{companion matrix} associated to $\lambda$. In what follows, unless otherwise specified all eigenvectors are column (right) eigenvectors; similarly for eigenspaces. Pick an eigenvector $v$ for the eigenvalue $\lambda$, and denote the one dimensional eigenspace of $\lambda$ by $E_\lambda$. Let $E$ be the positive half-space associated to $v$, i.e. if $\pi_1 \colon \mathbb{R}^d \rightarrow E_\lambda$ is the projection along the complementary invariant subspace, then 
\[ E = \{ x \in \mathbb{R}^d \hspace{ 2mm} | \hspace{2mm} \pi_1(x) = r v \hspace{3mm} \text{for some } r>0 \}. \]

\begin{thm}[Lind]
Let $\lambda$ be a Perron number of algebraic degree $d$, and $B$ and $E$ be as above. If $A = [a_{ij}]$ is an $n$-dimensional primitive non-negative integral matrix with spectral radius $\lambda$, then there are $z_i \in \mathbb{Z}^d \cap E$ for $1 \leq i \leq n$ such that $Bz_j = \sum_{i=1}^{n} a_{ij}z_i$.  

Conversely, if $\lambda$, $B$, and $E$ are as above, and the points $z_i \in  \mathbb{Z}^d \cap E$ and a non-negative integral matrix $A=[a_{ij}]$ satisfy $Bz_j = \sum_{i=1}^{n} a_{ij}z_i$, then every irreducible component of $A$ has spectral radius equal to $\lambda$.
\end{thm}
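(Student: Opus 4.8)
The plan is to translate everything into the arithmetic of the order $\mathbb{Z}[\lambda]\subset\mathbb{K}=\mathbb{Q}(\lambda)$. Since $B$ is the companion matrix of the minimal polynomial of $\lambda$, the power basis $1,\lambda,\dots,\lambda^{d-1}$ identifies $(\mathbb{R}^d,B,\mathbb{Z}^d)$ with $(\mathbb{K}\otimes_\mathbb{Q}\mathbb{R},\ \text{multiplication by }\lambda,\ \mathbb{Z}[\lambda])$. Under this identification $E_\lambda$ is the line corresponding to the real place of $\mathbb{K}$ at which $\lambda$ attains its largest absolute value (the Perron place), and the restriction of $\pi_1$ to $\mathbb{Z}[\lambda]$ is the map $\alpha\mapsto\sigma(\alpha)\,\xi$, where $\sigma\co\mathbb{K}\hookrightarrow\mathbb{R}$ is the embedding with $\sigma(\lambda)=\lambda$ and $\xi$ is a fixed nonzero $\lambda$-eigenvector of $B$ (this holds because $\pi_1$ intertwines the $B$-action with multiplication by $\lambda$ on $E_\lambda$, hence is $\mathbb{K}$-linear onto $E_\lambda$). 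In particular a lattice point $\alpha\in\mathbb{Z}[\lambda]$ lies in $E$ exactly when $\sigma(\alpha)$ has the same sign as the $\xi$-coordinate of the chosen eigenvector $v$.

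For the forward implication, let $A$ be an $n$-dimensional primitive integral matrix with spectral radius $\lambda$. By the Perron--Frobenius theorem $\lambda$ is a simple eigenvalue of $A$, so the $\lambda$-eigenspace of $A^{\mathsf T}$ is one-dimensional; being the kernel of $A^{\mathsf T}-\lambda I$, which has entries in $\mathbb{K}$, it is spanned by a vector with coordinates in $\mathbb{K}$, which in turn is a real multiple of the strictly positive Perron left eigenvector. Clearing denominators (and rescaling by $-1$ if needed to match the orientation of $v$) produces $w_1,\dots,w_n\in\mathbb{Z}[\lambda]$ with $wA=\lambda w$. Put $z_j:=w_j$. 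Coordinatewise, $wA=\lambda w$ reads $\lambda w_j=\sum_i a_{ij}w_i$, which under the above dictionary is exactly $Bz_j=\sum_i a_{ij}z_i$; and $\sigma(w_j)=w_j$ has the required sign, so each $z_j$ lies in $E$. This settles the forward direction.

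For the converse, assemble the $z_j$ into the columns of a $d\times n$ integer matrix $Z$, so that the hypothesis reads $BZ=ZA$. Applying $\pi_1$ and using $\pi_1B=B\pi_1$ gives $\lambda\,\pi_1(z_j)=\sum_i a_{ij}\,\pi_1(z_i)$; writing $\pi_1(z_j)=r_jv$ with $r_j>0$ (which is what $z_j\in E$ means), we obtain $\lambda r_j=\sum_i a_{ij}r_i$, so $r=(r_1,\dots,r_n)$ is a strictly positive left eigenvector of $A$ for the eigenvalue $\lambda$. A non-negative matrix admitting a strictly positive eigenvector has the corresponding eigenvalue as its spectral radius (Collatz--Wielandt), so $\rho(A)=\lambda$. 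To obtain the statement about components, put $A$ in Frobenius normal form; for an irreducible component on an index set $S$, projecting the relevant relations and using non-negativity gives $A_S^{\mathsf T}r_S\le\lambda r_S$ (entrywise) with $r_S>0$, hence $\rho(A_S)\le\lambda$. Moreover, if $S$ receives no arrow from outside, the defining relation restricts to $BZ_S=Z_S A_S$ with all the relevant $z_i$ still in $E$, so the computation above applies to $A_S$ verbatim and gives $\rho(A_S)=\lambda$; one then propagates this equality to the remaining components by working down through the Frobenius normal form.

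I expect the forward direction to be routine once the dictionary is set up, the only real point being the descent of the Perron eigenvector into $\mathbb{Z}[\lambda]$. The crux — and the step I anticipate will require the most care — is the converse's treatment of the irreducible components: one must arrange the Frobenius normal form so that, restricting the defining relation to a component together with all the components feeding into it, the Perron eigenvector $r$ remains visible, which is what upgrades the inequality $\rho(A_S)\le\lambda$ to the equality claimed for every component. When $A$ is itself irreducible — the case used in the applications in this paper — this last step is immediate from $\rho(A)=\lambda$.
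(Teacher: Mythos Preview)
The paper does not give a self-contained proof of Lind's theorem; it is cited from \cite{lind1984entropies}, and only the converse direction is sketched in Step~5 of the proof of Theorem~\ref{thm:upper-bound} for the specific matrix constructed there. Your forward direction is correct and is not treated in the paper.

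For the converse your approach and the paper's are dual. You apply $\pi_1$ to $BZ=ZA$ to obtain a strictly positive \emph{left} $\lambda$-eigenvector $r$ of $A$ and invoke Collatz--Wielandt. The paper instead restricts to a minimal subset on which the relation holds, takes the \emph{right} Perron eigenvector $v_\mu$ of that irreducible block, pushes it forward via the map $P(e_i)=z_i$ to an eigenvector of $B$ lying in $E$, and notes that the only eigenvector of $B$ with positive $E_\lambda$-component has eigenvalue $\lambda$. Both arguments establish $\rho(A_S)=\lambda$ for \emph{source} components $S$ (minimal subsets closed under in-neighbours), and both yield $\rho(A)=\lambda$.

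The gap in your proposal is the final ``propagation'' sentence. Your own inequality $A_S^{\mathsf T}r_S\le\lambda r_S$ is \emph{strict} in at least one coordinate whenever $S$ receives an arrow from outside; for irreducible $A_S$ with $r_S>0$ this forces $\rho(A_S)<\lambda$, not equality, so the equality cannot be pushed down the Frobenius normal form. Concretely, with $\lambda$ the plastic number ($\lambda^3=\lambda+1$), take $z_1=1$, $z_2=\lambda$, $z_3=\lambda^2$, $z_4=1+\lambda+\lambda^2\in\mathbb{Z}^3\cap E$: then $Bz_4=z_1+2z_2+z_3$, and the $\{z_4\}$-component of the resulting $4\times4$ matrix is $[0]$, of spectral radius $0$. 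The paper's Step~5 sidesteps this because it only needs \emph{some} irreducible component with spectral radius $\lambda$; that is also exactly what your argument actually establishes.
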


The above theorem of Lind gives a practical way to produce an integral primitive matrix with a given spectral radius; see \cite[page 289]{lind1984entropies}. However, it does not tell us how to find such a matrix with smallest (or close to smallest) dimension, since we are not given control over the size of the coordinates of $z_i$. Nevertheless, the referee has kindly mentioned to me that there exists a simple, but not necessarily practical, algorithm that computes the Perron--Frobenius degree of a Perron number: Assume that $\lambda$ is given by its minimal polynomial, and denote the algebraic degree of $\lambda $ by $d$. For every positive integer $n$, there are only finitely many primitive $n \times n $ integral matrices with spectral radius less than or equal to $\lambda$. One can algorithmically enumerate these. For each of them, one can algorithmically determine whether the spectral radius is equal to $\lambda$. So for $n = d$, we can check whether there is an integral primitive matrix of dimension $n$ which has $\lambda$ as the spectral radius. Recursively, if we fail at $n$, then we try at $n+1$. By Lind's theorem we eventually find an $n$ where we succeed; that $n$ is equal to $d_{PF}(\lambda)$.

For matrices with non-negative integral \emph{polynomial} entries, the situation is different. See the work of Boyle and Lind, which gives a uniform upper bound (in fact a 2 by 2 matrix) in this context \cite{boyle2002small}. For the related topic of inverse spectral problem for non-negative integral matrices see the works of Boyle--Handelman \cite{boyle1991spectra} and Kim--Ormes--Roush \cite{kim2000spectra} and the references therein.

\subsection{Idea of the proof} In \cite{thurston2014entropy}, Thurston gave a simpler proof of Lind's converse to the integer Perron--Frobenius theorem. Our proof of Theorem \ref{thm:upper-bound} follows Thurston's approach, while controlling the dimension of a constructed matrix. 

The tensor product $\mathbb{Q}(\lambda) \otimes_{\mathbb{Q}} \mathbb{R}$ can be identified with $\mathbb{R}^d$. Let $M_\lambda$ be the linear endomorphism of $\mathbb{R}^d \cong \mathbb{Q}(\lambda) \otimes_{\mathbb{Q}} \mathbb{R}$ induced by multiplication by $\lambda$ in $\mathbb{Q}(\lambda)$. The eigenvalues of $M_\lambda$ are the Galois conjugates of $\lambda$. Then $\mathbb{R}^d$ decomposes into invariant subspaces of $M_\lambda$ corresponding to real places and pairs of conjugate complex places of $\lambda$; see the opening paragraphs to Section \ref{sec:proof}. Fix an eigenvector for $M_\lambda$ with eigenvalue $\lambda$, and consider the positive half-space corresponding to $\lambda$. 

We start with a polygonal cone with apex at the origin that lies in the positive half-space and is invariant under $M_\lambda$. We then perturb the vertices of the cone to obtain an invariant polygonal cone $\mathcal{C}$ with \emph{integral} vertices; see \textbf{Steps 2--4} of the proof of Theorem \ref{thm:upper-bound}. It is during this perturbation that the minimal Hermite-like thickness appears in the picture. Since the polygonal cone has integral vertices, the semigroup $S$ generated by the set of integral points in the cone $\mathcal{C}$ under addition of vectors is finitely generated; see Proposition \ref{semigroup}. The cardinality of a generating set for the semigroup $S$ gives an upper bound for the dimension of an integral non-negative matrix $A$ with spectral radius $\lambda$. Moreover, after possibly passing to an irreducible component of $A$, an integral irreducible matrix with spectral radius $\lambda$ is obtained; see \textbf{Step 5}. Finally we give an upper bound for the dimension of $A$; see \textbf{Step 6}. 

\subsection{Plan of the paper} In Section \ref{sec:prelim}, we present a few preliminary lemmas. The proof of Theorem \ref{thm:upper-bound} is given in Section \ref{sec:proof}. Theorem \ref{thm:primitive} is proved in Section \ref{sec:primitive}. In Section \ref{sec:questions}, a few questions are posed.

\subsection{Acknowledgement} I would like to thank Douglas Lind for sharing his intuition that some lattice property of the ring of integers should play a role in the Perron--Frobenius degree. Many thanks to Curtis T. McMullen and Eva Bayer Fluckiger for helpful comments connected to Remark \ref{rem:Minkowski}. I am grateful to the anonymous referee for his/her very helpful comments, in particular for explaining how the bound for primitive matrices in Theorem \ref{thm:primitive} can be obtained. During this work, the author was supported by a Glasstone Research Fellowship in Science, a Titchmarsh Fellowship, and a UKRI Postdoctoral Research Fellowship.

\section{Preliminaries}

\label{sec:prelim}

\subsection{Lattice points}

Throughout this article, by a \emph{lattice} $\Lambda \subset \mathbb{R}^d$ we mean a lattice of full rank; i.e. a discrete subgroup of $\mathbb{R}^d$ isomorphic to $\mathbb{Z}^d$. 

\begin{prop}
	Let $\Lambda \subset \mathbb{R}^d$ be a lattice, and $x_1 , \cdots, x_k \in \Lambda$. Denote by $\mathcal{C}$ the convex cone over the points $x_i$ inside $\mathbb{R}^d$, that is 
	\[ \mathcal{C}:= \{ \alpha_1 x_1 + \cdots + \alpha_k x_k \hspace{2mm} | \hspace{2mm}  \alpha_i \geq 0 \hspace{3mm} \text{for every }i \}. \]
	Let $S$ be the semigroup generated by elements of $\mathcal{C} \cap \Lambda$ under vector addition. Define the compact set $C$, and the finite set $C_{\Lambda} \subset C$ as 
	\[ C:=  \{ \alpha_1 x_1 + \cdots + \alpha_k x_k \hspace{2mm} | \hspace{2mm}  0 \leq \alpha_i  \leq 1 \hspace{3mm} \text{for every }i \},  \hspace{3mm} C_{\Lambda}:= C \cap \Lambda. \]
	Then $C_{\Lambda}$ is a finite generating set for the semigroup $S$.
	\label{semigroup}
\end{prop}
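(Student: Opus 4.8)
The plan is to show that every element of $S$ can be written as a non-negative integer combination of the points in $C_\Lambda$, and conversely that $C_\Lambda \subset S$ so that such combinations indeed lie in $S$. The second inclusion is immediate: $C_\Lambda = C \cap \Lambda \subset \mathcal{C} \cap \Lambda$ since $C \subset \mathcal{C}$ (any point with $0 \le \alpha_i \le 1$ certainly has $\alpha_i \ge 0$), and by definition $S$ is generated by $\mathcal{C} \cap \Lambda$, so in particular $C_\Lambda$ generates a subsemigroup of $S$. The content is therefore the forward direction: $C_\Lambda$ already generates all of $S$.

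First I would reduce to a single generator of $S$. Since $S$ is generated by $\mathcal{C} \cap \Lambda$, it suffices to show that every $y \in \mathcal{C} \cap \Lambda$ is a non-negative integer combination of elements of $C_\Lambda$; then any element of $S$, being a finite sum of such $y$'s, inherits the same property. So fix $y \in \mathcal{C} \cap \Lambda$ and write $y = \alpha_1 x_1 + \cdots + \alpha_k x_k$ with all $\alpha_i \ge 0$. The key step is a descent on the quantity $\sum_i \alpha_i$ (or equivalently on $\sum_i \lfloor \alpha_i \rfloor$). Set $n_i := \lfloor \alpha_i \rfloor$ and consider
\[ y' := y - \sum_{i=1}^{k} n_i x_i = \sum_{i=1}^{k} (\alpha_i - n_i) x_i. \]
Each coefficient $\alpha_i - n_i$ lies in $[0,1)$, so $y' \in C$; and $y' = y - \sum_i n_i x_i \in \Lambda$ because $y \in \Lambda$ and each $x_i \in \Lambda$. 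Hence $y' \in C \cap \Lambda = C_\Lambda$. Therefore $y = y' + \sum_{i=1}^{k} n_i x_i$, and it remains only to observe that each $x_i$ itself lies in $C_\Lambda$ (take $\alpha_i = 1$ and all other coefficients $0$: this point is in $C$, and $x_i \in \Lambda$). Thus $y = y' + \sum_i n_i x_i$ exhibits $y$ as a non-negative integer combination of elements of $C_\Lambda$, since $n_i \ge 0$. Summing over the finitely many generators of $S$ finishes the forward inclusion.

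Finally, $C_\Lambda$ is finite because $C$ is compact (it is the image of the compact cube $[0,1]^k$ under the continuous linear map $(\alpha_i) \mapsto \sum_i \alpha_i x_i$, hence compact) and $\Lambda$ is discrete, so $C \cap \Lambda$ is a discrete compact set and therefore finite. I do not anticipate a genuine obstacle here; the one point requiring a little care is making sure the decomposition $y = y' + \sum_i n_i x_i$ uses only elements known to be in $C_\Lambda$ — that is, checking that the individual generators $x_i$ are themselves in $C$, which they are since they correspond to a vertex of the cube $[0,1]^k$. The argument is essentially the standard proof that the lattice points of a rational polyhedral cone form a finitely generated monoid (a Gordan-type lemma), specialized to the cone spanned by given lattice vectors.
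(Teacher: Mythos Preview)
Your argument is correct and is essentially identical to the paper's proof: both split $y=\sum_i \alpha_i x_i$ via $\alpha_i=\lfloor\alpha_i\rfloor+\{\alpha_i\}$, observe that the fractional part lands in $C_\Lambda$ while the integer part is a non-negative combination of the $x_i\in C_\Lambda$, and conclude finiteness from compactness of $C$ and discreteness of $\Lambda$. The only cosmetic difference is that you first reduce to generators $y\in\mathcal{C}\cap\Lambda$, whereas the paper treats an arbitrary $y\in S$ directly (which works since $S\subset\mathcal{C}$).
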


\begin{proof}
	For $\alpha \in \mathbb{R}$, denote the fractional part of $\alpha$ by $\{ \alpha \}$, and let $\floor{\alpha} = \alpha - \{ \alpha \}$. Any point $y \in S$ can be written as
	\[ y = \alpha_1 x_1 + \cdots + \alpha_k x_k = \big( \floor{\alpha_1} x_1 + \cdots + \floor{\alpha_k}x_k  \big) + \big( \{ \alpha_1 \} x_1 + \cdots + \{ \alpha_k \} x_k \big), \]
	where $\alpha_i , \floor{\alpha_i} \geq 0$ for each $i$. Note that we have $x_i \in C_{\Lambda}$ for each $i$, hence the first parenthesis is a sum of elements of $C_{\Lambda}$. As $y$ and the first parenthesis are both in $\Lambda$, the second parenthesis should represent a point in $\Lambda$ as well. On the other hand, the coefficients of the second parenthesis are in the interval $[0,1)$, and hence the second parenthesis lies in $C_\Lambda = C \cap \Lambda$. We have written $y$ as a sum of elements of $C_{\Lambda}$, hence $C_{\Lambda}$ is a generating set for the semigroup $S$. Since $C$ is compact and $\Lambda$ is a lattice, $C_{\Lambda}= C \cap \Lambda$ is a finite set. 
\end{proof}

By a \emph{Euclidean space} of dimension $d$ we mean a $d$-dimensional vector space $\mathbb{R}^d$ equipped with an inner product. A \emph{polytope} is the convex hull of finitely many points in $\mathbb{R}^d$. Let $\Lambda \subset \mathbb{R}^d$ be a lattice. If $\{ v_1, \cdots, v_d \} $ is a basis for $\Lambda \cong \mathbb{Z}^d$, then a \emph{fundamental domain} for $\Lambda$ is 
\[ \{ \alpha_1 v_1 + \cdots + \alpha_d v_d \hspace{2mm}|\hspace{2mm} 0 \leq \alpha_i < 1 \text{ for every }i\}.  \]
If $\mathbb{R}^d$ is a Euclidean space, then the \emph{covolume} of $\Lambda$ is defined as the volume of any fundamental domain for $\Lambda$. A \emph{lattice polytope} is a polytope whose vertices are lattice points.

\begin{prop}
	Let $\mathbb{R}^d$ be a Euclidean space, $\Lambda \subset \mathbb{R}^d$ be a lattice with covolume $\mathrm{Covol}(\Lambda)$, and $P \subset \mathbb{R}^d$ be a $d$-dimensional lattice polytope with volume $\mathrm{Vol}(P)$. Denote the number of lattice points in $P$ by $|P \cap \Lambda|$. Then
	\[ |P \cap \Lambda| \leq  \frac{\mathrm{Vol}(P)}{\mathrm{Covol}(\Lambda)} \cdot (d+1)!. \]
	The equality happens exactly when $P$ is a $d$-simplex with $|P \cap \Lambda|=d+1$.
	\label{number of lattice points}
\end{prop}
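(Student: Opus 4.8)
The plan is to first reduce to the standard lattice and then control the lattice points by means of an efficient triangulation. I would fix a $\mathbb{Z}$-basis $v_1,\dots,v_d$ of $\Lambda$ and let $L\co\mathbb{R}^d\to\mathbb{R}^d$ be the linear isomorphism with $L(v_i)=e_i$. Then $L(\Lambda)=\mathbb{Z}^d$, the image $L(P)$ is again a $d$-dimensional lattice polytope, and $|P\cap\Lambda|=|L(P)\cap\mathbb{Z}^d|$. Since $L$ scales $d$-dimensional volume by $|\det L|$ while $\mathrm{Covol}(\Lambda)=|\det L|^{-1}$ and $\mathrm{Covol}(\mathbb{Z}^d)=1$, the ratio $\mathrm{Vol}(P)/\mathrm{Covol}(\Lambda)$ is unchanged under $L$. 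Hence it suffices to prove the statement for $\Lambda=\mathbb{Z}^d$, where the inequality reads $|P\cap\mathbb{Z}^d|\le (d+1)!\,\mathrm{Vol}(P)$.

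Next I would invoke the standard fact from the combinatorial geometry of lattice polytopes that the finite point set $P\cap\mathbb{Z}^d$ admits a \emph{full} triangulation $T$: a triangulation of $P=\mathrm{conv}(P\cap\mathbb{Z}^d)$ into finitely many $d$-simplices with vertices in $\mathbb{Z}^d$ such that \emph{every} point of $P\cap\mathbb{Z}^d$ occurs as a vertex (for instance a pulling triangulation). Write $s$ for the number of $d$-simplices of $T$ and $N=|P\cap\mathbb{Z}^d|$. Each $\sigma\in T$ has $d+1$ vertices in $\mathbb{Z}^d$, so $d!\,\mathrm{Vol}(\sigma)$ is the absolute value of the determinant of $d$ integer vectors spanning $\mathbb{R}^d$, hence a positive integer; thus $\mathrm{Vol}(\sigma)\ge 1/d!$, and summing over $T$ gives $s\le d!\,\mathrm{Vol}(P)$. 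Since $T$ is full, $P\cap\mathbb{Z}^d$ is the union of the vertex sets of the $s$ simplices, each of cardinality $d+1$, so $N\le (d+1)s$. Combining, $N\le (d+1)s\le (d+1)!\,\mathrm{Vol}(P)$, as desired.

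For the equality statement I would trace back through both estimates. Equality in $N\le (d+1)s$ forces the vertex sets of the simplices of $T$ to be pairwise disjoint; but the dual graph of a triangulation of a connected polytope is connected, so if $s\ge 2$ there are two simplices sharing a $(d-1)$-face, and hence sharing its $d$ vertices --- impossible. Thus $s=1$, so $T=\{P\}$; in particular $P$ is a $d$-simplex and $P\cap\Lambda$ consists precisely of its $d+1$ vertices. Equality in $s\le d!\,\mathrm{Vol}(P)$ then also forces $d!\,\mathrm{Vol}(P)=\mathrm{Covol}(\Lambda)$, i.e.\ $P$ is a unimodular simplex, and conversely any unimodular $d$-simplex attains equality. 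The only genuinely non-routine inputs here are the existence of a full triangulation by lattice simplices and the connectivity of its dual graph, both of which are standard citable facts; everything else, including the way the factor $d!$ enters through the bound $\mathrm{Vol}(\sigma)\ge 1/d!$ for a lattice $d$-simplex, is bookkeeping.
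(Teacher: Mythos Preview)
Your proof is correct and follows essentially the same line as the paper's: triangulate $P$ into lattice $d$-simplices whose only lattice points are their vertices, bound each simplex's volume below by $\mathrm{Covol}(\Lambda)/d!$, and overcount $|P\cap\Lambda|$ by $(d+1)$ times the number of simplices. The differences are cosmetic --- the paper builds the triangulation explicitly by repeated coning from lattice points rather than citing pulling/full triangulations, and handles equality by noting that any decomposition with more than one simplex shares a vertex, which is the same content as your dual-graph connectivity argument. One remark: your equality analysis is in fact sharper than the statement as written. You correctly deduce that equality forces $P$ to be a \emph{unimodular} simplex ($d!\,\mathrm{Vol}(P)=\mathrm{Covol}(\Lambda)$), whereas the proposition only asserts ``$d$-simplex with $|P\cap\Lambda|=d+1$''; in dimensions $d\ge 3$ there are empty lattice simplices (e.g.\ Reeve tetrahedra) with exactly $d+1$ lattice points but $d!\,\mathrm{Vol}(P)>\mathrm{Covol}(\Lambda)$, so the stated condition is necessary but not sufficient.
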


\begin{proof}
	First consider the special case that $P$ has exactly $d+1$ vertices, and every lattice point in $P$ is a vertex of $P$. If $v_0, v_1, \cdots, v_d \in \mathbb{R}^d$ are the vertices of $P$, then the volume of the parallelepiped formed by the vectors $v_1 - v_0 , \cdots , v_d - v_0$ is equal to $d! \times \mathrm{Vol}(P)$. Since the volume of this parallelepiped is at least as large as the volume of a fundamental domain for $\Lambda$, we have  
	\begin{align*}
	d! \times \mathrm{Vol}(P) \geq \mathrm{Covol}(\Lambda),
	\end{align*}
	implying that  
	\[ |P \cap \Lambda| = d+1  \leq \frac{\mathrm{Vol}(P)}{\mathrm{Covol}(\Lambda)} \cdot (d+1)!.  \]
	In general, decompose $P$ into $d$-simplices $\Delta_1, \cdots, \Delta_n$ with disjoint interiors such that each simplex $\Delta_i$ contains no lattice point except for its vertices. This can be done for example as follows. Decompose $P$ into smaller polyhedra by coning off from one of the vertices of $P$. Here by coning off from a point $v \in P$ we mean that for every facet $F$ of $P$, the polyhedron which is the convex hull of $F \cup v$ is added unless its dimension is strictly smaller than that of $P$; for example if the starting polyhedron is a polygon, then coning off from a vertex $v$ is just decomposing the polygon into triangles via adding all the diagonals emanating from $v$. For any of the resulting polyhedra, successively take a lattice point inside or on the boundary, and cone off from that lattice point. After finitely many repetitions, we arrive at the decomposition into $\Delta_i$. 
	
	The desired inequality follows from adding up the corresponding inequalities for simplices $\Delta_1, \cdots, \Delta_n$. Note that if $P$ is not a $d$-simplex or $|P \cap \Lambda| > d+1$, then at least one lattice point in $P \cap \Lambda$ is counted for more than one simplex $\Delta_i$, and so the inequality is strict.
\end{proof}

%
%

\subsection{Minkowski sum and difference}

For sets $A , B \subset \mathbb{R}^d$, define their \emph{Minkowski sum} as 
\[ A + B : = \{ a +b \hspace{2mm} | \hspace{2mm} a \in A, \text{ and } b \in B \} \subset \mathbb{R}^d.  \]
Intuitively, $A + B$ is the union of all translates of $A$ by elements of $B$
\[  A + B = \bigcup_{b \in B} (A + b). \]
Define the \emph{Minkowski difference} of $A$ and $B$ by 
\[ A \div B := \{ c \in \mathbb{R}^d \hspace{2mm} | \hspace{2mm} B+c \subseteq A \}.  \]
If $B$ is empty, $A \div B$ is, by convention, equal to $\mathbb{R}^d$. Intuitively, $A \div B$ is the intersection of all translates of $A$ by the antipodes of elements of $B$
\[ A \div B = \bigcap_{b \in B} (A - b). \]
We have used the rather odd notation $\div$ for the Minkowski difference, in order to distinguish it from the set
\[ \{ a - b \hspace{2mm} | \hspace{2mm} a \in A, \text{ and } b \in B\} \subset \mathbb{R}^d. \]
The Minkowski sum and difference are \emph{not} in general the inverse of each other.

\begin{lem}
The following properties hold for sets $A , B, C \subset \mathbb{R}^d$
\begin{align*}
&A \subset (A+B)\div B, \\
&A \subset B \implies A \div C \subset B \div C, \\
&A \subset B \implies A + C \subset B +C.
\end{align*}
Moreover, if $A$ and $B$ are non-empty compact, convex sets, then 
\[ (A+B) \div B = A. \]
\label{minkowski-difference}
\end{lem}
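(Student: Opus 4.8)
The plan is to prove the three inclusions directly from the definitions, and then establish the identity $(A+B)\div B = A$ for non-empty compact convex sets by combining the first inclusion with its reverse. Let me sketch each piece.

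For the first inclusion $A \subseteq (A+B)\div B$: take $a \in A$; I must show $B + a \subseteq A+B$, which is immediate since for any $b \in B$ we have $a + b \in A + B$ by definition of the Minkowski sum. Hence $a \in (A+B)\div B$. For the second inclusion, suppose $A \subseteq B$ and take $c \in A \div C$, so $C + c \subseteq A \subseteq B$, whence $c \in B \div C$. For the third, if $A \subseteq B$ and $x = a + c$ with $a \in A$, $c \in C$, then $a \in B$ so $x \in B + C$. All three are one-liners using only the set-theoretic definitions, with the minor caveat that one should note the conventions when $B$ or $C$ is empty behave correctly (if $C = \emptyset$ then $A \div C = \mathbb{R}^d = B \div C$; if $B = \emptyset$ then $A + B = \emptyset$ and the first inclusion forces $A = \emptyset$ as well, consistent with $\emptyset \subseteq \mathbb{R}^d$).

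The substantive part is $(A+B)\div B = A$ when $A, B$ are non-empty, compact, and convex. The inclusion $\supseteq$ is the first bullet, so only $\subseteq$ remains: I must show that if $c \in \mathbb{R}^d$ satisfies $B + c \subseteq A + B$, then $c \in A$. The plan is to argue by contradiction using a separating hyperplane. If $c \notin A$, then since $A$ is non-empty, compact, and convex, there is a linear functional $\varphi$ and a value $t$ with $\varphi(a) \le t$ for all $a \in A$ but $\varphi(c) > t$; moreover we may take $t = \max_{a \in A}\varphi(a)$, attained since $A$ is compact. Now choose $b^* \in B$ maximizing $\varphi$ over $B$ (again using compactness of $B$). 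Then $\varphi(b^* + c) = \varphi(b^*) + \varphi(c) > \varphi(b^*) + t = \max_{b\in B}\varphi(b) + \max_{a\in A}\varphi(a) = \max_{x \in A+B}\varphi(x)$, so $b^* + c \notin A+B$, contradicting $B + c \subseteq A+B$. Hence $c \in A$, which proves $(A+B)\div B \subseteq A$ and completes the identity.

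The main (and essentially only) obstacle is the convexity-and-compactness step in the final identity: the reverse inclusion genuinely needs the hyperplane separation theorem, and one should be careful that the relevant maxima are attained — this is where compactness of both $A$ and $B$ is used, and convexity of $A$ is used for the strict separation of the point $c$ from $A$. Note that convexity of $B$ is not actually needed for this argument, though it is harmless to assume it; the statement as given includes it. I would present the three bullet inclusions tersely and devote the bulk of the write-up to the separation argument.
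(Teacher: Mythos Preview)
Your proof is correct and follows essentially the same idea as the paper: the three inclusions are dispatched directly from the definitions, and the nontrivial inclusion $(A+B)\div B \subseteq A$ is proved via hyperplane separation. The only difference is packaging. The paper phrases the separation argument in the language of support functions: for a non-empty compact convex set $A$, membership $x \in A$ is equivalent to $h_{\{x\}} \leq h_A$; from $x+B \subset A+B$ one gets $h_{\{x\}}+h_B \leq h_A+h_B$ (using $h_{A+B}=h_A+h_B$), cancels $h_B$, and concludes $x \in A$. Your version unfolds this into an explicit contradiction: choose a separating functional $\varphi$ with $\varphi(c) > \max_A \varphi$, pick $b^\ast$ maximizing $\varphi$ over $B$, and observe $\varphi(b^\ast+c) > \max_{A+B}\varphi$. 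The two arguments are the same computation; yours is more self-contained and avoids citing the support-function identity, while the paper's is terser once that machinery is in hand. Your side remark that convexity of $B$ is not actually used (only compactness, to attain $\max_B \varphi$) is correct and a nice observation.
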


\begin{proof}
The first three properties directly follow from the definition. We sketch the proof of the last implication, and refer the reader to e.g. \cite[Lemma 3.1.11, and Section 1.7]{schneider_2013} for details. Pick an inner product $\langle \cdot , \cdot \rangle $ on $\mathbb{R}^d$, and denote the \emph{support functions} for $A$ and $B$ by, respectively, $h_A$ and $h_B$. By definition, for any $x \in \mathbb{R}^d$ 
\[ h_A(x): = \sup\{ \langle x, y \rangle  \hspace{2mm} | \hspace{2mm} y \in A \}, \]
$h_B$ is defined similarly. By the \emph{separation theorem} for convex sets, for a non-empty compact convex set $A $ we have 
\[ a \in A \iff \langle a , x \rangle  \leq h_A(x) \hspace{2mm} \text{for all } x \in \mathbb{R}^d.  \]
Assuming $x \in (A+B)\div B$, we would like to show that $x \in A$. By hypothesis, $x + B \subset A + B$. Equivalently, the support function for $x+B$ does not exceed that of $A+B$ pointwise. This implies 
\[ h_{\{x\}} + h_B \leq h_A +h_B, \]
using the fact that $h_{A+B} = h_A + h_B$ for non-empty compact convex sets $A$ and $B$; see \cite[Theorem 1.7.5]{schneider_2013}. Cancelling $h_B$ from both sides gives us the inequality $h_{\{x\}} \leq h_A$, implying that $x \in A$. 
\end{proof}

\section{Proof of Theorem \ref{thm:upper-bound}}
\label{sec:proof}
We follow Bayer Fluckiger \cite{BAYERFLUCKIGER2006305} and Jarvis \cite{jarvis2014algebraic} for the definitions below. Let $\lambda$ be an algebraic integer, and $\mathbb{K} = \mathbb{Q}(\lambda)$ be the number field obtained by adjoining $\lambda$ to $\mathbb{Q}$. We may interpret the points of $\mathbb{K}$ as lying in a $d$-dimensional real linear space as follows. Let $\sigma_1 , \cdots, \sigma_r$ be the real embeddings of $\mathbb{K}$, and $\sigma_{r+1}, \overline{\sigma}_{r+1}, \cdots, \sigma_{r+s}, \overline{\sigma}_{r+s}$ be the pairwise conjugate complex embeddings of $\mathbb{K}$, where 
\begin{eqnarray}
r+2s = d .
\end{eqnarray}
Consider the embedding 
\begin{align*}
&\sigma \colon \mathbb{K} \longrightarrow \mathbb{R}^r \times \mathbb{C}^s, \\
& \sigma(x) = (\sigma_1(x), \cdots, \sigma_{r+s}(x)). 
\end{align*}
We may identify $\mathbb{C}$ with $\mathbb{R}^2$, by identifying $a+bi$ with $(a,b)$, then $\sigma$ becomes an embedding 
\[ \sigma \colon \mathbb{K} \longrightarrow \mathbb{R}^d. \]
The mapping $\sigma \colon \mathbb{K} \rightarrow \mathbb{R}^d$ identifies the vector space $\mathbb{R}^d$ with the tensor product $\mathbb{K}_\mathbb{R} : = \mathbb{K} \otimes_{\mathbb{Q}} \mathbb{R}$, 
\begin{align*}
&\mathbb{K} \otimes_{\mathbb{Q}} \mathbb{R} \longleftrightarrow \mathbb{R}^d \\
&x \otimes a \mapsto (\sigma x)a.
\end{align*} 

$\mathbb{R}^r \times \mathbb{C}^s$ has a canonical involution, which is identity on $\mathbb{R}^r$ and complex conjugation on $\mathbb{C}^s$. Let 
\begin{eqnarray}
\mathfrak{B}:= \{ \alpha \in \mathbb{R}^r \times \mathbb{C}^s \hspace{2mm} | \hspace{2mm} \alpha = \bar{\alpha}, \text{ and all components of }\alpha \text{ are positive}  \}.
\label{def:space-of-inner-products}
\end{eqnarray}

Given $\alpha \in \mathfrak{B}$, define the symmetric positive definite bilinear form $q_\alpha$ on $\mathbb{K}_\mathbb{R}$ by 
\begin{align*}
&q_\alpha \colon \mathbb{K}_\mathbb{R} \times \mathbb{K}_\mathbb{R} \longrightarrow \mathbb{R}\\
&(x,y) \mapsto \text{Trace}(\alpha x \bar{y}).
\end{align*}
Here $\text{Trace}(x_1 , \cdots, x_{r+s}) := \sum_{i=1}^{r} x_i + \sum_{j=r+1}^{r+s} (x_j + \overline{x}_j)$ denotes the trace of $\mathbb{K}_\mathbb{R} \cong \mathbb{R}^r \times \mathbb{C}^s$, and $\alpha x \bar{y}$ denotes component wise product in $\mathbb{R}^r \times \mathbb{C}^s$. Then $q_\alpha$ induces the norm $|\cdot |_\alpha$ on $\mathbb{K}_\mathbb{R}$ given by the following formula, where $\alpha = (\alpha_1, \cdots, \alpha_{r+s})$ with $\alpha_i \in \mathbb{R}^{>0}$
\begin{eqnarray}
|x|^2_\alpha = \sum_{i=1}^{r}\alpha_i \cdot |\sigma_i(x)|^2 + 2 \sum_{j=r+1}^{r+s}\alpha_j \cdot |\sigma_j(x)|^2.
\end{eqnarray}

Denote the ring of integers of $\mathbb{K}$ by $\mathcal{O}_\mathbb{K}$. Let $(\mathcal{O}_\mathbb{K}, q_\alpha)$ denote the lattice $\mathcal{O}_\mathbb{K}$ equipped with the inner product $q_\alpha$. The \emph{maximum} of $(\mathcal{O}_\mathbb{K}, q_\alpha)$ is defined as 

\begin{eqnarray*}
	\max(\mathcal{O}_\mathbb{K}, q_\alpha) = \inf \{ u \in \mathbb{R} \hspace{1mm} | \hspace{1mm} \text{for all } x \in \mathbb{K}_\mathbb{R}, \text{ there exists } y \in \mathcal{O}_\mathbb{K} \text{ with } q_\alpha(x-y, x-y) \leq u \}.
\end{eqnarray*}

\vspace{2mm}

The \emph{covering radius} of $(\mathcal{O}_\mathbb{K}, q_\alpha)$ is, by definition, the square root of $\max(\mathcal{O}_\mathbb{K}, q_\alpha)$. Define the \emph{determinant} of $(\mathcal{O}_\mathbb{K}, q_\alpha)$ as the determinant of the matrix of $q_\alpha$ in a $\mathbb{Z}$-basis of $\mathcal{O}_\mathbb{K}$; i.e. if $\omega_1, \cdots, \omega_d$ is a basis for the abelian group $\mathcal{O}_\mathbb{K} \cong \mathbb{Z}^d$ (under addition) then $\det(\mathcal{O}_\mathbb{K},q_\alpha)$ is the determinant of the $d \times d$ matrix $(q_\alpha (w_i, w_j))$. With this definition, the determinant of $(\mathcal{O}_\mathbb{K}, q_\alpha)$ is equal to the \emph{square} of the volume of any fundamental domain for the lattice $(\mathcal{O}_\mathbb{K},q_\alpha)$. We remark that some texts define the determinant as the volume of a fundamental domain, but we preferred to follow Bayer Fluckiger's convention as in \cite{BAYERFLUCKIGER2006305}.

\begin{definition}
Define the \emph{Hermite-like thickness} $\tau(\mathcal{O}_\mathbb{K}, q_\alpha)$ of $(\mathcal{O}_\mathbb{K}, q_\alpha)$ as 
\begin{align*}
\tau(\mathcal{O}_\mathbb{K}, q_\alpha): = \frac{\max(\mathcal{O}_\mathbb{K}, q_\alpha) }{\det (\mathcal{O}_\mathbb{K}, q_\alpha)^\frac{1}{d}}.
\label{def:Hermite-like-thickness}
\end{align*}
Define the \emph{minimal Hermite-like thickness} as 
\begin{align*}
\tau_{\min} (\mathcal{O}_\mathbb{K}) = \inf \{  \tau(\mathcal{O}_\mathbb{K}, q_\alpha) \hspace{2mm} | \hspace{2mm} \alpha \in \mathfrak{B})  \},
\end{align*}
where $\mathfrak{B}$ is as in (\ref{def:space-of-inner-products}). 
\label{def:tau-min}
\end{definition}

\begin{remark}
Although we called $\tau_{\min}(\mathcal{O}_{\mathbb{K}})$ the minimal Hermite-like thickness, it should be noted that the minimum is taken over the set of inner products coming from elements of $\mathfrak{B}$ and not all possible inner products on $\mathbb{R}^d$.
\end{remark}

It is clear that the concepts of maximum, covering radius, and Hermite-like thickness can be defined more generally for a lattice in a Euclidean space; see \cite{BAYERFLUCKIGER2006305}. 

\begin{definition}
Let $\mathbb{K}$ be a number field. Assume that $\omega_1, \cdots, \omega_d$ is any integral basis for $\mathcal{O}_\mathbb{K}$. Denote the complete list of places of $\mathbb{K}$ by $\sigma_1, \cdots, \sigma_d$. The \emph{discriminant} of $\mathbb{K}$ is defined as the square of the determinant of the $d \times d$ matrix $(\sigma_i(\omega_j))$.
\label{def:discriminant}
\end{definition} 

See \cite[Chapters 3 and 7]{jarvis2014algebraic} for further properties of the discriminant as well as the embedding of $\mathcal{O}_\mathbb{K}$ in $\mathbb{R}^d$. 

\newtheorem*{thm:upper-bound}{Theorem \ref{thm:upper-bound}}
\begin{thm:upper-bound}
Let $\lambda$ be a Perron number of algebraic degree $d \geq 3$ and spectral ratio $\rho$. Set $\mathbb{K} := \mathbb{Q}(\lambda)$. Let $\mathcal{O}_\mathbb{K}$ be the ring of integers of $\mathbb{K}$, and denote the discriminant and the minimal Hermite-like thickness of $\mathbb{K}$ by, respectively, $D_\mathbb{K}$ and $\tau_{\min}(\mathcal{O}_\mathbb{K})$. Then $d_{PF}^{irr}(\lambda)$ is bounded above by each of
\[ \Big(\dfrac{8d}{1-\rho}\Big)^{d^2} \tau_{\min}(\mathcal{O}_\mathbb{K})^\frac{d}{2} \]
and
\[ \Big(\dfrac{8d}{1-\rho}\Big)^{d^2} \sqrt{D_\mathbb{K}}. \]
\end{thm:upper-bound}

\begin{proof}
Let $\mathbb{K}_\mathbb{R}:= \mathbb{K} \otimes_\mathbb{Q} \mathbb{R}$. Let $\sigma_1 , \cdots, \sigma_{r+s}$ be as before, and identify $\mathbb{K}_\mathbb{R}$ with $\mathbb{R}^r \times \mathbb{C}^s \cong \mathbb{R}^d$. A place $\sigma_j$ is a field homomorphism $\sigma_j \colon \mathbb{Q}(\lambda) \rightarrow (\mathbb{R} \text{ or } \mathbb{C})$ and hence $\sigma_j$ is completely determined by $\sigma_j(\lambda)$ which is one of the Galois conjugates of $\lambda$. Assume that $\sigma_1$ is the real place corresponding to $\lambda$ itself; i.e. $\sigma_1(\lambda)=\lambda$. Let $M_\lambda$ be the linear endomorphisms of $\mathbb{K}_\mathbb{R}$ induced by multiplication by $\lambda $ in $\mathbb{K}$. The eigenvalues of $M_\lambda$ are the Galois conjugates of $\lambda$. For any Galois conjugate $\lambda_i$, denote by $E_i$ the invariant subspace for $M_\lambda$ with eigenvalue $\lambda_i$, and let $\pi_i \colon \mathbb{K}_\mathbb{R} \rightarrow E_i$ be the projection along the complementary invariant subspace. Therefore, $\pi_i$ is the projection onto the $i$th factor under the identification $\mathbb{K}_\mathbb{R} \cong \mathbb{R}^r \times \mathbb{C}^s$.

Before going into the details, we explain the main idea when $\lambda$ is a cubic algebraic integer that is not totally real. In order to find a non-negative integral matrix with spectral radius $\lambda$, we follow Thurston's proof of Lind's theorem. See \cite[pages 353--354]{thurston2014entropy} and \cite{lind1984entropies}. Let $E_1$ be the one dimensional invariant subspace for $M_\lambda$ with eigenvalue $\lambda$, and $E_2$ be the two dimensional invariant subspace corresponding to the pair of complex Galois conjugates $\{ \delta, \overline{\delta} \} $ of $\lambda$. The endomorphism $M_\lambda$ of $\mathbb{K}_\mathbb{R}$ leaves $E_1$ and $E_2$ invariant, and it acts on $E_1 \cong \mathbb{R}$ and $E_2 \cong \mathbb{C}$ by multiplication by, respectively, the numbers $\lambda$ and $\delta$. Pick a large positive integer $N = N(\delta, \lambda)$ such that if $P_\delta$ is a regular $N$-gon inscribed in a circle of radius $R$ around the origin in $E_2$, then $P_\delta \subset E_2 \cong \mathbb{C}$ is invariant under multiplication by the complex number $\delta/\lambda$. Such an integer $N$ exists since by the Perron condition the absolute value of $\delta/\lambda$ is strictly less than $1$. Let $v$ be an eigenvector of $M_\lambda$ with eigenvalue $\lambda$, and $E_{2}^v$ be the affine plane $Rv + E_2$. Then $E_2^v$ lies in the positive half-space $E$ corresponding to $\lambda$ and $v$. 

Denote the vertices of the shifted polygon $P_v := Rv+ P_\delta \subset E_2^v$ by $v_1 , \cdots, v_k$, and choose integral points $z_1, \cdots, z_k \subset \mathbb{R}^3$ such that the distance between $z_i$ and $v_i$ is `small'. Since the cone over the points $v_1, \cdots, v_k$ lies in the positive half-space and is invariant under $M_\lambda$, and the distance between $z_i$ and $v_i$ is small, it is reasonable to expect that the cone $\mathcal{C}$ over the points $z_i$ also lies in the positive half-space $E$ and is invariant under $M_\lambda$ for `large' $R$. Let $S$ be the semigroup generated by the set of integral points in the cone $\mathcal{C}$ under vector addition. Then $M_\lambda$ preserves $S$ and induces an action $M_\lambda^S$ on $S$. Moreover, $S$ has a finite generating set, and we can estimate an upper bound for the size $|G|$ of a generating set $G$ using Proposition \ref{semigroup}. If we write the action of $M_\lambda^S$ on $S$ in the generating set $G$, we obtain a non-negative integral matrix of size $|G|$ whose spectral radius is equal to $\lambda$. The details of the proof are as follows.\\

\textbf{Step 1: Choosing an  inner product on $\mathbb{R}^r \times \mathbb{C}^s$.} \\

Define $\mathfrak{B}$ as in (\ref{def:space-of-inner-products}). Pick $\alpha \in \mathfrak{B}$ and equip $\mathbb{R}^r \times \mathbb{C}^s$ with the inner product $q_\alpha$. Note that for the norm $|\cdot|_\alpha$ and for any $x \in \mathbb{K}_\mathbb{R}$ 
\begin{eqnarray}
|x|_\alpha^2 = \sum_{j=1}^{r+s} |\pi_j(x)|_\alpha^2, 
\label{norm-sum}
\end{eqnarray}
and hence
\begin{eqnarray}
|x|_\alpha \geq |\pi_j(x)|_\alpha \hspace{6mm} \text{for} \hspace{2mm} 1 \leq j \leq r+s.
\label{norm-comparison}
\end{eqnarray}

Let $\ell$ be the covering radius of $(\mathcal{O}_\mathbb{K}, q_\alpha)$. Then
\begin{eqnarray}
\ell : = \max(\mathcal{O}_\mathbb{K}, q_\alpha)^\frac{1}{2} = \tau(\mathcal{O}_\mathbb{K}, q_\alpha)^\frac{1}{2} \cdot \det (\mathcal{O}_\mathbb{K}, q_\alpha)^\frac{1}{2d}.
\end{eqnarray} 

\vspace{1.5mm}
\textbf{Step 2: Defining the polygon $P_j$ in the invariant subspace $E_j$ for each $j>1$.}\\

Define 
\begin{eqnarray}
\rho_j = \frac{|\sigma_j(\lambda)|}{\lambda} \in \mathbb{R} \hspace{6mm} \text{for} \hspace{2mm}  1 < j \leq r+s.
\end{eqnarray}
Hence 
\begin{eqnarray}
\rho = \max_{j>1} \{ \rho_j \}. 
\end{eqnarray}
By the Perron condition, $\rho_j \in (0,1)$ for each $j>1$. Set 
\begin{eqnarray}
R_j = \frac{(2 \sqrt{d}+4)\ell}{1 - \rho} \hspace{6mm} \text{for} \hspace{2mm}  1 < j \leq r.
\label{R-real}
\end{eqnarray} 

\begin{remark}
Clearly $R_j$ does not depend on $1<j \leq r$. However, we decided that this notation would be more suitable if one would like to improve the bounds in the article by substituting $\rho_j$ instead of $\rho$ in the definition of $R_j$. Similarly, in what follows $R_j$ for $j>r$ will not depend on $j$.
\end{remark}

For each real place $\sigma_j$ with $j>1$, define $P_j$ as the set of points of distance at most $R_j$ from the origin in $E_j$; in particular $P_j$ is an interval. Note, for future use, that 
\begin{eqnarray}
R_j \leq \Big( \frac{8 \sqrt{d}}{1 - \rho} \Big) \ell  \hspace{6mm} \text{for} \hspace{2mm}  1 < j \leq r.
\label{upper-bound-R-real}
\end{eqnarray}
For $j>r$, define the natural number $N_j \geq 3$ as the smallest positive integer satisfying
\begin{eqnarray}
N_j^2 \geq \frac{2\sqrt{d}+9}{1- \rho}.
\label{lower-bound-N}
\end{eqnarray}
Note, for later use, that
\begin{eqnarray}
N_j^2 \leq  \frac{16 \sqrt{d}}{1-\rho}. 
\label{upper-bound-R-complex}
\end{eqnarray}
In the above, we used the fact that the smallest whole square exceeding $u \geq 16$ is less than or equal to $(\sqrt{u}+1)^2 \leq 3u/2 +1$. Set 
\begin{eqnarray}
R_j = N_j^2 \ell \hspace{6mm} \text{for} \hspace{2mm}  r < j \leq r+s.
\label{R-complex}
\end{eqnarray}
For $j>r$, define the solid polygon $P_j \subset E_j$ as a regular $N_j$-gon inscribed in the circle of radius $R_j$ around the origin. Define the polytope $P$ as the product of $P_j$ for $j>1$
\[ P :=  \{ z \in \prod_{j >1} E_j \hspace{2mm}| \hspace{2mm}\pi_j(z) \in P_j \hspace{2mm} \text{for every }j \} \subset \prod_{j>1} E_j.  \]
The number of vertices of $P$ is equal to the product of the number of vertices of $P_j$ for $j>1$, which is equal to $2^{r-1} \times \prod_{j >r} N_j$. \\

\textbf{Step 3: Defining the cone $\mathcal{C}$ in the positive half-space of $\lambda$.}\\

Let $v \in E_1$ be the positive (with respect to $\pi_1$) unit length eigenvector for the eigenvalue $\lambda$. Here unit length is considered with respect to the distance $| \cdot |_\alpha$. Let $E$ be the positive half-space corresponding to $v$. Set 
\begin{eqnarray}
L = \max\{ R_j\hspace{2mm} | \hspace{2mm} 1<j \leq r+s \}.
\label{L}
\end{eqnarray} 
Note, for future use, that 
\begin{eqnarray}
L > 3 \ell.
\label{L-ell}
\end{eqnarray}
Denote by $P_v$ the translated polytope $Lv + P$, and let $\mathcal{C}_v \subset E$ be the cone over the polytope $P_v$. Equivalently, if $\frac{1}{L} \cdot P_j$ is the dilation of $P_j$ by the factor $\frac{1}{L}$ centered at the origin of $E_j$, then
\begin{eqnarray}
 \mathcal{C}_v =  \{ z \in E \hspace{2mm} | \hspace{2mm} \frac{\pi_j(z)}{|\pi_1(z)|_\alpha} \in \frac{1}{L} \cdot P_j \hspace{3mm} \text{for every } j>1 \}.
 \label{cone-Cv}
\end{eqnarray} 
Denote the vertices of $P_v$ by $v_1, v_2, \cdots, v_k$, where 
\begin{eqnarray}
k = 2^{r-1} \times \prod_{j >r} N_j .
\label{number-of-vertices-k}
\end{eqnarray} 
Pick integral points $z_1, z_2, \cdots , z_k$ in $\mathcal{O}_\mathbb{K}$ such that the distance between $v_i$ and $z_i$ does not exceed the covering radius $\ell$ of $(\mathcal{O}_\mathbb{K}, q_\alpha)$. 

First we show that each $z_i$ lies in the positive half-space $E$. Identify the subspace $E_1$ with $\mathbb{R}$ via
\[ x = rv \in E_1 \longleftrightarrow r  \in \mathbb{R}. \]
It is enough to show that for each $i$ we have $\pi_1(z_i) >0$. By the triangle inequality
\[ \pi_1(z_i) \geq \pi_1( v_i) - |\pi_1(v_i - z_i)| \geq  \pi_1( v_i) - |v_i - z_i|_\alpha \geq  L - \ell > 0,  \]
where the second inequality holds by (\ref{norm-comparison}), and the last inequality is true by (\ref{L-ell}).

Define $\mathcal{C}$ as the cone over the points $z_1, z_2, \cdots, z_k$ with apex at the origin
\begin{eqnarray}
\mathcal{C} = \{ z \in E \hspace{2mm} | \hspace{2mm} z = \beta_1 z_1 + \cdots + \beta_k z_k, \hspace{3mm} \beta_i \geq 0 \hspace{3mm} \text{for every }i \}.
\end{eqnarray}
For each $z_i$, define $w_i$ as the intersection point of the ray through $z_i$ from the origin and the affine hyperplane $Lv+E_1^c$, where $E_1^c = \oplus_{j>1} E_j$ is the complementary invariant subspace to $E_1$. Then 
\begin{eqnarray}
 \mathcal{C} =  \{ z \in E \hspace{2mm} | \hspace{2mm}   z = \beta_1 w_1 + \cdots + \beta_k w_k, \hspace{3mm} \beta_i \geq 0 \hspace{3mm} \text{for every }i  \}.
\end{eqnarray}
Equivalently, if $Q_v \subset Lv + E_1^c$ is the convex hull of $w_1, w_2, \cdots, w_k$, then
\begin{eqnarray}
\mathcal{C} = \{ z \in E \hspace{2mm} | \hspace{2mm} \frac{z}{|\pi_1(z)|_\alpha} \in \frac{1}{L} \cdot Q_v \} = \{ z \in E \hspace{2mm} | \hspace{2mm} \frac{Lz}{|\pi_1(z)|_\alpha} \in Q_v \}.
\label{cone-C}
\end{eqnarray}

Hence, $\mathcal{C}$ is the cone over the points $z_1, z_2, \cdots, z_k$, or equivalently the cone over the points $w_1, w_2, \cdots, w_k$. In what follows, we will use the two descriptions of $\mathcal{C}$ as needed.\\

\textbf{Step 4: Showing that the cone $\mathcal{C}$ is invariant.}\\ 

We need a few lemmas in order to prove that $\mathcal{C}$ is invariant. 

\begin{lem}
	For each $i$ we have
	\begin{align*}
	&|v_i|_\alpha \leq \sqrt{d}L \\
	&|v_i - w_i|_\alpha \leq (2\sqrt{d}+2)\ell.
	\end{align*}
	\label{upper-bound-v}
\end{lem}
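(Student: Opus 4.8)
The plan is to establish both inequalities by elementary bookkeeping with the $q_\alpha$-orthogonal decomposition $\mathbb{K}_\mathbb{R} = \bigoplus_{j=1}^{r+s} E_j$ underlying (\ref{norm-sum}), together with repeated use of the triangle inequality; no idea beyond the estimates already set up in Steps~2 and 3 should be needed. For the first inequality I would use that $v_i$, being a vertex of $P_v = Lv + P$, has $\pi_1(v_i) = Lv$ and $\pi_j(v_i) \in P_j$ for $j > 1$. Since $v$ has unit $|\cdot|_\alpha$-length, $|\pi_1(v_i)|_\alpha = L$, and since each $P_j$ with $j>1$ lies in the $|\cdot|_\alpha$-ball of radius $R_j \le L$ about the origin of $E_j$ (by the definition (\ref{L}) of $L$ as a maximum), we get $|\pi_j(v_i)|_\alpha \le L$ for all $j$; summing squares through (\ref{norm-sum}) and using $r+s \le d$ (because $r+2s=d$) gives $|v_i|_\alpha^2 = \sum_{j=1}^{r+s}|\pi_j(v_i)|_\alpha^2 \le (r+s)L^2 \le dL^2$.

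For the bound on $|v_i - w_i|_\alpha$ the key is to control the radial rescaling carrying $z_i$ to $w_i$. Writing $\pi_1(z_i) = s_i v$, the choice $|z_i - v_i|_\alpha \le \ell$ combined with (\ref{norm-comparison}) gives $|s_i - L| = |\pi_1(z_i - v_i)|_\alpha \le \ell$, so $s_i \ge L - \ell > 2\ell > 0$ by (\ref{L-ell}). Since $w_i$ is the point of the ray $\mathbb{R}_{\ge 0} z_i$ whose $\pi_1$-component equals $Lv$, we have $w_i = (L/s_i)z_i$, and hence $|1 - L/s_i| = |s_i-L|/s_i \le \ell/(L-\ell)$. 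As $\pi_1(v_i - w_i) = Lv - Lv = 0$, the vector $v_i - w_i$ lies in $E_1^c = \bigoplus_{j>1}E_j$; writing $\pi'$ for the orthogonal projection onto $E_1^c$ I would decompose
\[ v_i - w_i = \pi'(v_i) - \tfrac{L}{s_i}\pi'(z_i) = \pi'(v_i - z_i) + \Bigl(1 - \tfrac{L}{s_i}\Bigr)\pi'(z_i). \]

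Bounding the two summands by $|\pi'(v_i - z_i)|_\alpha \le |v_i - z_i|_\alpha \le \ell$ and $|1 - L/s_i|\,|\pi'(z_i)|_\alpha \le \frac{\ell}{L-\ell}(|v_i|_\alpha + \ell) \le \frac{\ell}{L-\ell}(\sqrt d L + \ell)$ — the last step using the first inequality and $|z_i|_\alpha \le |v_i|_\alpha + \ell$ — and then using $L > 3\ell$ from (\ref{L-ell}), which forces $L/(L-\ell) < 3/2$ and $\ell/(L-\ell) < 1/2$, yields $|v_i - w_i|_\alpha < \tfrac32(1+\sqrt d)\ell \le (2\sqrt d + 2)\ell$. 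The only mildly delicate point, and the step I expect to demand the most care, is this estimate on the rescaling factor $L/s_i$: one needs the first coordinate $s_i$ of the integral point $z_i$ to be uniformly bounded away from $0$, which is exactly the purpose of having chosen $L$ large relative to the covering radius $\ell$ in (\ref{L-ell}). Everything else is a routine use of orthogonality and the triangle inequality.
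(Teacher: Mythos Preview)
Your proposal is correct and follows essentially the same approach as the paper. The first inequality is handled identically; for the second, the paper inserts $r_i v_i$ (with $r_i = L/s_i$) and bounds $|w_i - v_i|_\alpha \le r_i|z_i - v_i|_\alpha + |r_i - 1|\,|v_i|_\alpha \le (\sqrt{d}+1)\frac{L\ell}{L-\ell} \le (2\sqrt{d}+2)\ell$ using only $L \le 2(L-\ell)$, whereas you first project to $E_1^c$ and split as $\pi'(v_i - z_i) + (1 - L/s_i)\pi'(z_i)$ --- a cosmetic variant of the same triangle-inequality-plus-rescaling argument, yielding a slightly sharper constant.
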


\begin{proof}
	For the first inequality, we have
	\begin{align*}
	|v_i|_\alpha^2 &= \sum_{j=1}^{r+s} |\pi_j(v_i)|_\alpha^2 \leq L^2 + \sum_{j>1} R_j^2 \leq (r+s) L^2 \leq d L^2,
	\end{align*}	
	where we used (\ref{norm-sum}) and (\ref{L}). 
	
	Assume that $w_i = r_i z_i$ for some positive real number $r_i$. Note that 
	\begin{align*}
	&|\pi_1(z_i) - \pi_1(v_i)|_\alpha=  |\pi_1(z_i -v_i)|_\alpha \leq |z_i - v_i|_\alpha \leq \ell  \\
	& \implies \pi_1(z_i) = \pi_1(v_i)+ \pi_1(z_i -v_i) \in [L - \ell , L + \ell],
	\end{align*}
	where we used (\ref{norm-comparison}) in the first line above. Therefore
	\begin{align*}
	r_i = \frac{\pi_1(w_i)}{\pi_1(z_i)} = \frac{L}{\pi_1(z_i)} \in [\frac{L}{L+\ell}, \frac{L}{L - \ell}]  \\ 
	\implies r_i \leq \frac{L}{L - \ell}, \hspace{3mm} \text{and} \hspace{3mm} |r_i -1| \leq \frac{\ell }{L - \ell} .
	\end{align*}
	Hence, by the triangle inequality  
	
	\begin{align*}
	&|w_i - v_i |_\alpha = |r_iz_i - v_i|_\alpha \leq |r_iz_i - r_i v_i|_\alpha+|r_iv_i - v_i|_\alpha  \\ 
	&= r_i|z_i - v_i|_\alpha+|r_i-1||v_i|_\alpha  \leq (\frac{L}{L-\ell})\ell + (\frac{\ell}{L-\ell})\sqrt{d}L \\ 
	&= (\sqrt{d}+1)\frac{L \ell}{L- \ell} \leq (2\sqrt{d}+2)\ell.
	\end{align*}
	The last implication above, $L \leq 2(L- \ell)$, holds by (\ref{L-ell}).
\end{proof}

Set 
\begin{eqnarray}
b = (2 \sqrt{d}+2) \ell.
\end{eqnarray}

\begin{lem}
The following estimates hold for every $z_i$, where $M_\lambda$ is the linear endomorphism of $\mathbb{K}_\mathbb{R}$ induced by multiplication by $\lambda$ in $\mathbb{K}$.

\begin{enumerate}[a)]
	\item \[ \frac{|\pi_j (M_\lambda(z_i))|_\alpha}{|\pi_1 (M_\lambda(z_i))|_\alpha}  \leq \rho_j \Big( \frac{R_j + \ell}{L - \ell} \Big) \hspace{6mm} \text{for every } j>1.\]
	\item \[ \frac{|\pi_j (M_\lambda(z_i))|_\alpha}{|\pi_1 (M_\lambda(z_i))|_\alpha} \leq \frac{R_j - b}{L} \hspace{6mm} \text{for every } 1<j\leq r.  \]
	\item \[  \frac{|\pi_j(M_\lambda(z_i))|_\alpha}{|\pi_1(M_\lambda(z_i))|_\alpha} \leq \frac{1}{L} \bigg( R_j \big(1 - \frac{5}{N_j^2} \big) - b \bigg) \hspace{6mm} \text{for every } j>r. \]
\end{enumerate}
\label{estimates}
\end{lem}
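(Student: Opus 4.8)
The plan is to argue place by place, exploiting that the inner product $q_\alpha$ is orthogonal with respect to the decomposition $\mathbb{K}_\mathbb{R}=\bigoplus_{j=1}^{r+s}E_j$ and that $M_\lambda$ acts on $E_j$ by multiplication by $\sigma_j(\lambda)$. The starting point is the identity $\pi_j(M_\lambda(z_i))=\sigma_j(\lambda)\,\pi_j(z_i)$, valid for every $j$. Since $|\cdot|_\alpha$ restricted to $E_j$ is a positive multiple of the Euclidean norm there, this gives $|\pi_j(M_\lambda(z_i))|_\alpha=|\sigma_j(\lambda)|\,|\pi_j(z_i)|_\alpha=\rho_j\lambda\,|\pi_j(z_i)|_\alpha$ for $j>1$, while $|\pi_1(M_\lambda(z_i))|_\alpha=\lambda\,\pi_1(z_i)$ with $\pi_1(z_i)>0$ by Step 3. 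The factors of $\lambda$ cancel, so the ratio to be estimated equals $\rho_j\,|\pi_j(z_i)|_\alpha/\pi_1(z_i)$.

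For part (a) I would bound numerator and denominator separately. Writing $z_i=v_i+(z_i-v_i)$ with $|z_i-v_i|_\alpha\le\ell$, and using (\ref{norm-comparison}) together with $|\pi_j(v_i)|_\alpha\le R_j$ (because $\pi_j(v_i)\in P_j$), the triangle inequality gives $|\pi_j(z_i)|_\alpha\le R_j+\ell$; likewise $\pi_1(z_i)\ge\pi_1(v_i)-|\pi_1(z_i-v_i)|\ge L-\ell$, which is exactly the estimate already made in Step 3. Combining these yields part (a).

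Parts (b) and (c) then follow from part (a) via a single inequality. The constants in (\ref{R-real}), (\ref{lower-bound-N}) and in the definition of $b$ were chosen precisely so that each claimed right-hand side is at least $\dfrac{\rho R_j+2\ell}{L}$: for a real place $1<j\le r$ the relation $R_j(1-\rho)=(2\sqrt d+4)\ell$ rearranges to $R_j-b=\rho R_j+2\ell$; for a complex place $j>r$, writing $R_j=N_j^2\ell$ one has $R_j\big(1-5/N_j^2\big)-b=R_j-(2\sqrt d+7)\ell$, which by $N_j^2\ge\frac{2\sqrt d+9}{1-\rho}$ is $\ge\rho R_j+2\ell$ (and $N_j\ge 3$ makes $1-5/N_j^2>0$, so (c) is not vacuous). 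Since $\rho_j\le\rho$, it therefore suffices in both cases to prove
\[ \rho\,\frac{R_j+\ell}{L-\ell}\;\le\;\frac{\rho R_j+2\ell}{L}. \]
Clearing denominators — all quantities are positive, and $L-\ell>0$ because $L\ge R_j\ge\ell/(1-\rho)$ — this is equivalent to $\rho(L+R_j)\le 2(L-\ell)$. Now $R_j\le L$ by the definition (\ref{L}) of $L$, so $L+R_j\le 2L$, while $L\ge\ell/(1-\rho)$ gives $\rho L\le L-\ell$; hence $\rho(L+R_j)\le 2\rho L\le 2(L-\ell)$, which finishes parts (b) and (c).

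The only genuine obstacle is bookkeeping rather than a conceptual difficulty: one must check that the otherwise opaque constants are calibrated so that, after discarding the harmless factor $\rho_j/\rho\le 1$, the real‑place and complex‑place estimates collapse onto the same elementary inequality $\rho(L+R_j)\le 2(L-\ell)$. Everything else is the block‑diagonal norm computation of the first paragraph (that $M_\lambda$ scales the $j$‑th block of $(\mathbb{K}_\mathbb{R},q_\alpha)$ by exactly $|\sigma_j(\lambda)|$) and the triangle‑inequality bounds of the second, both routine.
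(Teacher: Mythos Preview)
Your proof is correct and follows the same overall strategy as the paper: part (a) is identical (block-diagonal action of $M_\lambda$ plus triangle inequality), and parts (b), (c) are deduced from (a) by algebraic verification that the constants $R_j$, $N_j$, $b$ were chosen large enough. The only difference is organizational: the paper treats the real and complex cases separately, each time manipulating $\rho_j\frac{R_j+\ell}{L-\ell}$ directly against the target bound, whereas you insert the common intermediate quantity $\frac{\rho R_j+2\ell}{L}$ and reduce both cases to the single clean inequality $\rho(L+R_j)\le 2(L-\ell)$, which follows at once from $R_j\le L$ and $\rho L\le L-\ell$; this is a tidy unification but not a different idea.
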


\begin{proof}
\begin{enumerate}[a)]
	\item We have 
	\begin{align*}
	\frac{|\pi_j (M_\lambda(z_i))|_\alpha}{|\pi_1 (M_\lambda(z_i))|_\alpha} &= \frac{|\sigma_j(\lambda)|}{|\sigma_1(\lambda)|} \cdot \frac{|\pi_j(z_i)|_\alpha}{|\pi_1(z_i)|_\alpha} = \rho_j \cdot \frac{|\pi_j(z_i)|_\alpha}{|\pi_1(z_i)|_\alpha} \\ 
	&\leq \rho_j \cdot  \frac{|\pi_j(v_i)|_\alpha + |\pi_j(z_i-v_i)|_\alpha}{|\pi_1(v_i)|_\alpha - |\pi_1(v_i-z_i)|_\alpha} \nonumber\\
	& \leq \rho_j \cdot  \frac{|\pi_j(v_i)|_\alpha + |z_i-v_i|_\alpha}{|\pi_1(v_i)|_\alpha - |v_i-z_i|_\alpha} \leq \rho_j \Big( \frac{R_j + \ell}{L - \ell} \Big),
	\label{first-bound}
	\end{align*}
	where we have used inequality (\ref{norm-comparison}).
	
	\item Assume that $\sigma_j$ is a real place where $1 < j \leq r$, and set $R'_j = R_j/\ell$, $L' = L/ \ell$, and $b' = b / \ell$. Then
	\begin{align*}
	 \rho_j \Big( \frac{R_j + \ell}{L - \ell} \Big) \leq \frac{R_j - b}{L} &\iff \rho_j \Big( \frac{R'_j+1}{L'-1} \Big) \leq \frac{R'_j - b'}{L'}  \\ 
	 &\iff (\rho_j+b') L'+ R_j'- b' \leq (1-\rho_j)L'R'_j. 
	 \end{align*}
	 Hence, using $\rho_j \in (0,1)$, it is enough to have
	 \begin{align*}
	 (1+b')L'+ R_j' \leq (1-\rho_j)L'R'_j &\iff \frac{1+b'}{R'_j}+\frac{1}{L'} \leq 1 - \rho_j \\ 
	 & \iff \frac{2\sqrt{d}+3}{R'_j}+\frac{1}{L'} \leq 1 - \rho_j.
	 \end{align*}
	 On the other hand, $L' \geq R_j'$ by (\ref{L}). Hence, it suffices to have  
	 \begin{align*}
	 \frac{2\sqrt{d}+4}{R'_j} \leq 1 - \rho_j \iff R'_j \geq \frac{2\sqrt{d}+4}{1 - \rho_j},
	 \end{align*} 
	 which holds by (\ref{R-real}).
	\item Now assume that $\sigma_j$ is a complex place where $j>r$. Then
	\begin{align*}
	\rho_j \Big( \frac{R_j + \ell}{L - \ell} \Big) \leq \frac{1}{L} \bigg( R_j \big(1 - \frac{5}{N_j^2} \big) - b \bigg) & \iff \rho_j (R_j +\ell) \leq \Big( \frac{L - \ell}{L} \Big) \Big( R_j \big(1 - \frac{5}{N_j^2} \big) - b \Big).
	\end{align*}
	By (\ref{L}), we have $L \geq R_j = N_j^2 \ell$, and so
	\begin{align*}
	\frac{L-\ell }{L} = 1 - \frac{\ell}{L} \geq 1 - \frac{1}{N_j^2}.
	\end{align*}
	Set $b' = b/\ell$. After substituting $R_j = N_j^2 \ell$ and dividing both sides by $\ell$, it is enough to have 
	\begin{align*}
	& \rho_j (N_j^2+1) \leq \Big( 1- \frac{1}{N_j^2} \Big)\Big( N_j^2-5-b' \Big).
	\end{align*}
	Multiplying both sides by $N_j^2$ and then expanding, the last inequality is equivalent to
	\begin{align*}
	N_j^4(1- \rho_j) + (5+b') \geq N_j^2(b'+ 6 +\rho_j).
	\end{align*}
	So, using $\rho_j \in (0,1)$ and neglecting the positive term $5+b'$, it suffices to have
	\begin{align*}
	N_j^4(1- \rho_j) \geq N_j^2(b'+ 7) \iff N_j^2 \geq \frac{b'+7}{1 - \rho_j} = \frac{2\sqrt{d}+9}{1- \rho_j},
	\end{align*}
	and the last inequality holds by (\ref{lower-bound-N}).
\end{enumerate}
\end{proof}

Recall that $\mathcal{C}$ is the cone over the points $w_1, \cdots, w_k$ with apex at the origin, i.e. the cone over the polytope $Q_v$ (i.e. the convex hull of $w_i$). For each $i$, the point $w_i$ of $Q_v$ is of distance at most $b = (2\sqrt{d}+2)\ell$ from the corresponding point $v_i$ of $P_v$. Since $b$ is `small', we expect the polytope $Q_v$ to contain a `smaller version' of $P_v$. More precisely, define
\begin{align*}
P_v^b = \{ x \in P_v \hspace{2mm} | \hspace{2mm} \text{dist}(x, \partial P_v) \geq b \},
\end{align*}
where 
\begin{align*}
\text{dist}(X, Y) : = \min \{ |x - y|_\alpha \hspace{2mm} | \hspace{2mm} x \in X, \hspace{2mm}  y \in Y \}.
\end{align*}

\begin{lem}
We have $P_v^b \subset Q_v$.
\label{inclusion-U}
\end{lem}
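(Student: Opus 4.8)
The plan is to show the reverse inclusion on the level of cones and then translate it back to the affine slice $Lv + E_1^c$. Writing $K = \prod_{j>1} E_j = E_1^c$, I think of $P_v = Lv + P$ and $Q_v$ as polytopes in the affine hyperplane $Lv + K$, and I identify this hyperplane with $K$ itself (so $v_i$ corresponds to the ``address'' $\pi_{>1}(v_i) \in \prod_{j>1} P_j$ and $w_i$ to $\pi_{>1}(w_i)$). The first step is to record that $P_v^b$ is exactly the Minkowski difference $P_v \div (b\,\overline{B})$, where $\overline{B}$ is the closed unit ball of $|\cdot|_\alpha$ restricted to $K$: a point $x$ has $\text{dist}(x,\partial P_v)\ge b$ if and only if the ball of radius $b$ about $x$ is contained in $P_v$, which is the definition of $\div$. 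Dually, $P_v^b + b\overline{B} = P_v$ by Lemma \ref{minkowski-difference}, since $P_v$ and $b\overline{B}$ are nonempty compact convex sets.

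The heart of the matter is then the inclusion $P_v^b \subset \text{conv}(w_1,\dots,w_k) = Q_v$. The idea is that since $|v_i - w_i|_\alpha \le b$ for every $i$ (Lemma \ref{upper-bound-v}), the vertex set $\{w_i\}$ is a ``$b$-perturbation'' of $\{v_i\}$, and a convex body shrunk inward by $b$ still fits inside the convex hull of such a perturbation. Concretely, I would take $x \in P_v^b$ and use the support-function criterion from the proof of Lemma \ref{minkowski-difference}: for every direction $u \in K$, since $x + b\overline{B} \subset P_v$, we get $\langle x, u\rangle_\alpha + b|u|_\alpha \le h_{P_v}(u) = \max_i \langle v_i, u\rangle_\alpha$; pick an index $i$ attaining this max, so $\langle x,u\rangle_\alpha \le \langle v_i,u\rangle_\alpha - b|u|_\alpha \le \langle w_i, u\rangle_\alpha$ by Cauchy--Schwarz applied to $|v_i - w_i|_\alpha \le b$. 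Hence $\langle x,u\rangle_\alpha \le \max_i \langle w_i,u\rangle_\alpha = h_{Q_v}(u)$ for every $u$, and by the separation theorem for the nonempty compact convex set $Q_v$ this gives $x \in Q_v$.

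The one point requiring care — and the step I expect to be the main obstacle — is the identification of the affine slice with the Euclidean space $K$ and making sure all the distances used in Lemma \ref{upper-bound-v} and in the definition of $P_v^b$ are measured consistently with $|\cdot|_\alpha$ on $K$ (as opposed to on all of $\mathbb{K}_\mathbb{R}$). Since $K = E_1^c$ is a $q_\alpha$-orthogonal complement of $E_1$ (the decomposition $|x|_\alpha^2 = \sum_j |\pi_j(x)|_\alpha^2$ of (\ref{norm-sum}) is orthogonal), restricting $q_\alpha$ to $K$ is harmless: $|v_i - w_i|_\alpha$ computed in $\mathbb{K}_\mathbb{R}$ agrees with the restricted norm once one knows $v_i, w_i$ lie in the common affine hyperplane $Lv + K$, and the support-function argument above only ever pairs against $u \in K$. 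With that bookkeeping in place the rest is the standard Minkowski-difference manipulation, so I would keep the write-up short: state $P_v^b = P_v \div (b\overline{B})$, invoke Lemma \ref{minkowski-difference}, run the one-line support-function comparison using Lemma \ref{upper-bound-v}, and conclude.
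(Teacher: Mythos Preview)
Your argument is correct and uses the same ingredients as the paper's proof: the bound $|v_i - w_i|_\alpha \le b$ from Lemma~\ref{upper-bound-v}, the identification $P_v^b = P_v \div (b\overline{B})$, and the support-function characterisation of membership in a compact convex set. The packaging differs slightly. The paper first proves the intermediate inclusion $P_v \subset Q_v + B$ by writing an arbitrary $x \in P_v$ as a convex combination $\sum_i \alpha_i v_i = \sum_i \alpha_i w_i + \sum_i \alpha_i(v_i - w_i)$ and observing that the two sums lie in $Q_v$ and $B$ respectively; it then applies Lemma~\ref{minkowski-difference} to cancel $B$ and obtain $P_v^b \subset P_v \div B \subset (Q_v + B)\div B = Q_v$. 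You instead compare $h_{P_v^b}$ and $h_{Q_v}$ directly, using Cauchy--Schwarz in the step $\langle v_{i^*},u\rangle - b|u|_\alpha \le \langle w_{i^*},u\rangle$. Your route is marginally shorter since it avoids the explicit $P_v \subset Q_v + B$ step, while the paper's version makes the geometric picture (shift each vertex by at most $b$, then erode by $b$) a bit more transparent; in substance the two arguments are equivalent. Your remark about the orthogonality of $E_1$ and $E_1^c$ under $q_\alpha$ is the right way to handle the bookkeeping, and it is exactly what the paper relies on implicitly.
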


\begin{proof}
Let $B$ be the ball of radius $b$ around the origin in $E_1^c = \oplus_{j>1} E_j$. We show that 
\[ P_v \subset Q_v + B,  \]
where $Q_v + B$ denotes the Minkowski sum. To see this, pick an arbitrary point $x \in P_v$ and write it as a convex combination
\[ x = \sum_{i=1}^{k} \alpha_i v_i, \]
with $0 \leq \alpha_i \leq 1$ and $\sum_{i=1}^{k} \alpha_i =1$. Then
\[ x = \sum_{i=1}^{k} \alpha_i w_i + \sum_{i=1}^{k} \alpha_i (v_i - w_i). \]
If we set $y = \sum_{i=1}^{k} \alpha_i w_i$ and $z = \sum_{i=1}^{k} \alpha_i (v_i - w_i)$, then $y \in Q_v$. Moreover, by the triangle inequality $z \in B$, since each term $v_i - w_i$ lies in $B$. This shows that $P_v \subseteq Q_v + B$. 

If we denote the Minkowski difference by $\div$, then we have
\[ P_v \div B \subset (Q_v+B) \div B = Q_v,  \]
where the last equality holds since $Q_v$ and $B$ are non-empty, compact, and convex. See Lemma \ref{minkowski-difference}. Therefore, it suffices to show that $P_v^b \subset P_v \div B$.
It follows from the definition that $P_v^b+B \subset P_v$. Hence, assuming that $P_v^b$ is non-empty, we have
\[ P_v^b = (P_v^b+B) \div B \subset P_v \div B,  \]
where we used Lemma \ref{minkowski-difference} twice. This completes the proof.
\end{proof}

Define 
\begin{eqnarray*}
P_j^b := \{ x \in P_j \hspace{2mm} | \hspace{2mm} \text{dist}(x, \partial P_j)\geq b \},
\end{eqnarray*}
and set
\begin{align*}
V := Lv+ \{ x \in E_1^c \hspace{2mm} | \hspace{2mm} \pi_j(x) \in P_j^b \hspace{3mm} \text{for every } j>1 \},
\end{align*}
\begin{align*}
W : = Lv+ \{ x \in E_1^c \hspace{1mm}| \hspace{1mm} |\pi_j(x)|_\alpha \leq R_j -b \hspace{2mm} \text{for } 1 < j \leq r, \hspace{1mm}   |\pi_j(x)|_\alpha \leq R_j (1 - \frac{5}{N_j^2} ) -b \hspace{2mm} \text{for } j>r \}.
\end{align*}

\begin{lem}
We have $W \subset V \subset P_v^b \subset Q_v$.
\label{inclusion-W-Q}
\end{lem}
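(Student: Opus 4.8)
The plan is to prove the three inclusions $W \subset V$, $V \subset P_v^b$, and $P_v^b \subset Q_v$ separately; the last of these is precisely Lemma \ref{inclusion-U}, so only the first two require work, and I would reduce each to an elementary geometric fact.

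For $V \subset P_v^b$, I would use that $P_v = Lv + P$ and that $P = \prod_{j>1} P_j$ is, by (\ref{norm-sum}), an \emph{orthogonal} product of the convex sets $P_j \subset E_j$ with respect to $q_\alpha$. The key observation is that for any point $x$ in the interior of such a product one has $\mathrm{dist}(x,\partial P) = \min_{j>1}\mathrm{dist}(\pi_j(x),\partial P_j)$: the bound ``$\le$'' follows by sliding a single coordinate $\pi_j(x)$ straight to $\partial P_j$ while keeping the others fixed, and ``$\ge$'' follows because any $y \in \partial P$ has $\pi_j(y) \in \partial P_j$ for some $j$, whence $|x-y|_\alpha \ge |\pi_j(x)-\pi_j(y)|_\alpha \ge \mathrm{dist}(\pi_j(x),\partial P_j)$ by (\ref{norm-comparison}). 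If $Lv+x \in V$ then each $\pi_j(x) \in P_j^b$ satisfies $\mathrm{dist}(\pi_j(x),\partial P_j)\ge b>0$, so $x$ is interior to $P$ and the identity above gives $\mathrm{dist}(Lv+x,\partial P_v) = \mathrm{dist}(x,\partial P)\ge b$, i.e. $Lv+x \in P_v^b$. Here $\partial P_v$ is understood relative to the affine hyperplane $Lv+E_1^c$, since $P_v$ is a full-dimensional polytope there.

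For $W \subset V$, I would split into real and complex places. If $j$ is a real place ($1<j\le r$), then $P_j$ is the $q_\alpha$-interval of radius $R_j$ about the origin, so $P_j^b$ is the interval of radius $R_j-b$, and the inequality $|\pi_j(x)|_\alpha \le R_j-b$ defining $W$ is exactly the statement $\pi_j(x)\in P_j^b$. If $j$ is a complex place ($j>r$), then $P_j$ is a regular $N_j$-gon inscribed in the circle of radius $R_j$, whose apothem (the $q_\alpha$-distance from the origin to each edge-line) equals $R_j\cos(\pi/N_j)$; hence any $y\in E_j$ with $|y|_\alpha \le R_j\cos(\pi/N_j)-b$ lies in $P_j$ and is at distance $\ge b$ from every edge-line, so $y\in P_j^b$. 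It therefore suffices to check $R_j\big(1-\tfrac{5}{N_j^2}\big) \le R_j\cos(\pi/N_j)$, i.e. $1-\tfrac{5}{N_j^2}\le\cos(\pi/N_j)$, and this follows from the elementary inequality $\cos\theta \ge 1-\theta^2/2$ with $\theta=\pi/N_j$, since $\pi^2/2<5$. Combining the two cases, every point of $W$ satisfies $\pi_j(x)\in P_j^b$ for all $j>1$, which is the definition of $V$.

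The only step that needs genuine care is the product-distance identity used for $V\subset P_v^b$, together with pinning down that $\partial P_v$ means the boundary relative to $Lv+E_1^c$; everything else is bookkeeping. It is worth noting that the complex-place computation is exactly why the constant $5$ was inserted into the definitions of $N_j$ and $R_j$ in \textbf{Step 2}: it is the slack needed to make $1-\tfrac{c}{N^2}\le\cos(\pi/N)$ hold for all $N\ge 3$ via the clean bound $\cos\theta\ge1-\theta^2/2$.
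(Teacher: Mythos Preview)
Your proposal is correct and follows essentially the same route as the paper: the paper also splits into the three inclusions, cites Lemma~\ref{inclusion-U} for $P_v^b\subset Q_v$, handles $V\subset P_v^b$ via the observation that a boundary point of the orthogonal product $P=\prod_{j>1}P_j$ must project to $\partial P_j$ for some $j$ (using only the ``$\ge$'' half of your product-distance identity), and handles $W\subset V$ via the same inequality $\cos(\pi/N)>1-5/N^2$. The only cosmetic differences are that you state the full two-sided distance identity and phrase the complex-place step in terms of the apothem, whereas the paper phrases it as the minimum distance from the center to $\partial P_j$; these are the same quantity for a regular polygon.
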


\begin{proof}
The inclusion $W \subset V$ follows from the following simple fact: Let $T$ be a regular $N$-gon inscribed in a circle of radius $R$ centered at the point $O$. Every point in $\partial T$ is of distance at least $R(1 - 5/N^2)$ from $O$.

For completeness, we give a proof of the above fact. After scaling, we may assume that $R=1$. The minimum distance $d(O ,x)$ for $x \in \partial T$ is obtained by drawing the perpendicular from $O$ to a side of $T$. Such a perpendicular has length $\cos(\frac{\pi}{N})$. Hence
\[ d(O, x) \geq \cos(\frac{\pi}{N}) > 1 - \frac{\pi^2}{2N^2}> 1 - \frac{5}{N^2}, \]
where we have used the inequality $\cos(x) > 1 - \frac{x^2}{2}$ for $0 < x <1$.

The inclusion $P_v^b \subset Q_v$ was proved in Lemma \ref{inclusion-U}. Now we show that $V \subset P_v^b$. Pick arbitrary points $x \in V$ and $y \in \partial P_v = \partial (Lv + P)$. Since $P = \prod_{j>1} P_j$ is a product, there is $j>1$ such that $\pi_j(y) \in \partial P_j$. Therefore 
\[ \text{dist}(x, y) \geq \text{dist}(\pi_j(x), \pi_j(y)) \geq \text{dist}(P_j^b , \partial P_j) \geq b.  \]
In the above, we used the fact that projection onto a non-empty closed convex set in a Euclidean space is distance decreasing; see e.g. \cite[Theorem 1.2.1]{schneider_2013}. Hence $x \in P_v^b$, proving the inclusion $V \subset P_v^b$.
\end{proof}

Using (\ref{cone-C}), in order to show that $\mathcal{C}$ is invariant, it is enough to prove that for every $z_i$
\begin{align*}
L \cdot \frac{M_\lambda(z_i)}{|\pi_1(M_\lambda(z_i))|_\alpha} \in  W  \subset Q_v.
\end{align*}
Equivalently, we would like to show that for every $j>1$ and every $z_i$ the following inequalities hold
\begin{align*}
&\frac{|\pi_j (M_\lambda(z_i))|_\alpha}{|\pi_1(M_\lambda(z_i))|_\alpha} \leq  \frac{R_j -b}{L} \hspace{3mm} \text{for } 1 < j \leq r  , \\
&\frac{|\pi_j(M_\lambda(z_i))|_\alpha}{|\pi_1(M_\lambda(z_i))|_\alpha} \leq\frac{1}{L} \bigg( R_j \big(1 - \frac{5}{N_j^2} \big) - b \bigg) \hspace{3mm} \text{for } j>r.
\end{align*}
But the above inequalities hold by Lemma \ref{estimates}. This finishes the proof of the invariance of $\mathcal{C}$. \\

\textbf{Step 5: Defining a non-negative integral matrix $A$, where each irreducible component of $A$ has spectral radius equal to $\lambda$.}\\

Let $S$ be the semigroup generated by elements of $\mathcal{C} \cap \mathcal{O}_\mathbb{K}$ under vector addition. By Proposition \ref{semigroup}, the set of integral points (i.e. elements of $\mathcal{O}_\mathbb{K}$) in the compact region
\begin{eqnarray}
C : = \{ \alpha_1 z_1 + \alpha_2 z_2 + \cdots + \alpha_k z_k \hspace{1mm} | \hspace{1mm} 0 \leq \alpha_i \leq 1 \hspace{3mm} \text{for every } i\}
\label{compact-C} 
\end{eqnarray}
generate $S$. Enumerate the set of points in $C \cap \mathcal{O}_\mathbb{K}$ by $c_1, \cdots, c_n$. Recall that $M_\lambda$ acts on $S$. Moreover, $M_\lambda$ can be represented by the companion matrix $B$ when written in the basis $\{ 1, \lambda, \cdots, \lambda^{d-1} \}$ of $\mathbb{K}_\mathbb{R} = \mathbb{Q}(\lambda) \otimes \mathbb{R}$. Let $A=[a_{ij}]$ be a non-negative integral matrix corresponding to the action of $B$ on $S$ in the basis $\{ c_1, \cdots, c_n\}$. Hence 
\begin{eqnarray}
Bc_j = \sum_{i=1}^{n} a_{ij} c_i,
\label{matrix-A}
\end{eqnarray}
where $B \colon \mathbb{R}^d \rightarrow \mathbb{R}^d$ is the companion matrix associated with $\lambda$. There might be various choices for $A$, when $Bc_j$ can be written as a non-negative linear combination of $c_1, \cdots, c_n$ in more than one way.

Lind's argument \cite[pages 288--289]{lind1984entropies} shows that every irreducible component of $A$ has spectral radius $\lambda$. We follow the proof given in \cite[Lemma 11.1.10]{lind_marcus_2021} briefly for the reader's convenience. Replace $A$ by one of its irreducible components; this amounts to taking a minimal subset of $\{ c_1 , \cdots, c_n \}$ for which (\ref{matrix-A}) holds. Let $\mu$ be the spectral radius of $A$. Let $e_i$ be the $i$th unit vector in $\mathbb{R}^n$, and define the linear map $P \colon \mathbb{R}^n \rightarrow \mathbb{R}^d$ by $P(e_i)=c_i$. Then by (\ref{matrix-A}) we have $PA=BP$. Since $A$ is non-negative and irreducible, by the Perron--Frobenius theorem $A$ has a positive eigenvector $v_\mu$ corresponding to $\mu$. Since $Pv_\mu$ is a positive linear combination of the $c_j$, and each $c_j$ satisfies $\pi_1(c_j)>0$, we necessarily have $\pi_1(Pv_\mu)>0$ and hence $Pv_\mu \neq 0$. Moreover, 
\[ B(Pv_\mu)=P(Av_\mu)=\mu(Pv_\mu). \]
Hence, $Pv_\mu$ is an eigenvector for $B$ with eigenvalue $\mu$ . But the only eigenvectors of $B$ with positive $E_1$ component lie in $E_1$. Hence $Pv_\mu$ is a multiple of $v$ (the eigenvector with eigenvalue $\lambda$) and 
\begin{align*}
 \mu (Pv_\mu) &= B(Pv_\mu)=\lambda (Pv_\mu) \\
 &\implies \mu = \lambda.
\end{align*}

\begin{remark}
In \cite[page 354]{thurston2014entropy}, Thurston gave a new proof of Lind's converse to the integer Perron--Frobenius theorem. Thurston assumed that the matrix constructed via his method can be taken to be primitive as well. If this is the case, then Theorem \ref{thm:upper-bound} readily upgrades to give an upper bound for the Perron--Frobenius degree of a Perron number. 
\label{primitive-case}
\end{remark}

\textbf{Step 6: Bounding the dimension of the matrix $A$.}\\ 

By Proposition \ref{number of lattice points}, the number of integral points in $C$ is at most 
\begin{align*}
\frac{\textsc{Vol}(C)}{\text{Covol}(\mathcal{O}_{\mathbb{K}})} \cdot (d+1)! = \frac{\textsc{Vol}(C)}{\text{det}(\mathcal{O}_\mathbb{K} , q_\alpha)^\frac{1}{2}} \cdot (d+1)!.
\end{align*}
We give an upper bound for $\textsc{Vol}(C)$ via a series of lemmas. 

\begin{lem}
For every $j>1$ and every $w_i$, we have
\[ |\pi_j(w_i)|_\alpha < 2 R_j. \]
\label{length-of-projection-w}
\end{lem}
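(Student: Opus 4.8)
\textbf{Proof proposal for Lemma \ref{length-of-projection-w}.}

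The plan is to bound $|\pi_j(w_i)|_\alpha$ by comparing $w_i$ with the corresponding vertex $v_i$ of $P_v$, for which the projections are controlled by construction. First I would recall that by the definition of $v_i$ as a vertex of $P_v = Lv + P$, we have $\pi_j(v_i) \in P_j$ for every $j > 1$, and $P_j$ lies inside the ball of radius $R_j$ about the origin of $E_j$ (it is either an interval of that half-length when $1 < j \leq r$, or a regular $N_j$-gon inscribed in the circle of radius $R_j$ when $j > r$); hence $|\pi_j(v_i)|_\alpha \leq R_j$. Then I would invoke Lemma \ref{upper-bound-v}, which gives $|v_i - w_i|_\alpha \leq (2\sqrt{d}+2)\ell = b$, together with inequality (\ref{norm-comparison}) to pass from the full norm to the $j$-th projection: $|\pi_j(v_i - w_i)|_\alpha \leq |v_i - w_i|_\alpha \leq b$.

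Combining these via the triangle inequality yields
\[ |\pi_j(w_i)|_\alpha \leq |\pi_j(v_i)|_\alpha + |\pi_j(w_i - v_i)|_\alpha \leq R_j + b. \]
It then remains to check that $R_j + b < 2R_j$, i.e. that $b < R_j$, for each $j > 1$. For real places $1 < j \leq r$ this follows from (\ref{R-real}): $R_j = \frac{(2\sqrt{d}+4)\ell}{1-\rho} > (2\sqrt{d}+4)\ell > (2\sqrt{d}+2)\ell = b$ since $1 - \rho < 1$. For complex places $j > r$ we have $R_j = N_j^2 \ell$ with $N_j^2 \geq \frac{2\sqrt{d}+9}{1-\rho} > 2\sqrt{d}+9 > 2\sqrt{d}+2$, so again $R_j > b$. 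This completes the argument.

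I do not anticipate a real obstacle here; the lemma is a routine triangle-inequality estimate whose only subtlety is making sure the constant $b$ is genuinely dominated by each $R_j$, which is immediate from the (deliberately generous) choices of $R_j$ in (\ref{R-real}) and (\ref{lower-bound-N}). The one place to be slightly careful is to use (\ref{norm-comparison}) rather than an unjustified componentwise bound, since the inner product $q_\alpha$ need not be the standard one; but (\ref{norm-comparison}) is exactly what licenses $|\pi_j(\cdot)|_\alpha \leq |\cdot|_\alpha$.
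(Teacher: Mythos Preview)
Your proposal is correct and follows essentially the same route as the paper: triangle inequality on $\pi_j(w_i)$, the bound $|\pi_j(v_i)|_\alpha \leq R_j$ from the construction, inequality (\ref{norm-comparison}) together with Lemma \ref{upper-bound-v} to control $|\pi_j(w_i-v_i)|_\alpha$ by $b=(2\sqrt{d}+2)\ell$, and finally the check $R_j > b$ from (\ref{R-real}), (\ref{lower-bound-N}), (\ref{R-complex}). The paper's proof is line-for-line the same argument.
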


\begin{proof}
By the triangle inequality
\begin{align*}
|\pi_j(w_i)|_\alpha &\leq |\pi_j(v_i)|_\alpha + |\pi_j(w_i - v_i)|_\alpha \\ 
& \leq |\pi_j(v_i)|_\alpha + |w_i - v_i|_\alpha \\ 
&\leq R_j + (2 \sqrt{d}+2)\ell < 2 R_j \\
& \iff R_j > (2 \sqrt{d}+2)\ell.
\end{align*}
In the first line above (\ref{norm-comparison}) is used. The second line follows from Lemma \ref{upper-bound-v}, and the last inequality holds by (\ref{R-real}), (\ref{lower-bound-N}), and (\ref{R-complex}).
\end{proof}

\begin{lem}
Let $M:= 2 k \sqrt{d}$, where $k$ is the number of vertices of $P_v$. Then
\[ C \subset \{ rx \in \mathbb{R}^d \hspace{2mm} | \hspace{2mm} x \in Q_v, \hspace{1mm} \text{and} \hspace{2mm} 0 \leq r \leq M \}.  \]
\label{inclusion-C}
\end{lem}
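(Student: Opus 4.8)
The plan is to take an arbitrary point $c = \sum_{i=1}^k \alpha_i z_i$ with $0 \le \alpha_i \le 1$ (any element of $C$) and exhibit it as $c = r x$ with $x \in Q_v$ and $0 \le r \le M = 2k\sqrt d$. The natural candidate is to use the ray description of $\mathcal C$: since $c \in \mathcal C$, provided $c \neq 0$ the ray through $c$ from the origin meets the affine hyperplane $Lv + E_1^c$ in a unique point $x$, and that point lies in $Q_v$ by the second description of $\mathcal C$ in (\ref{cone-C}) (indeed $\mathcal C$ is exactly the cone over $Q_v$). Then $c = r x$ for some $r \ge 0$, and the whole content of the lemma is the estimate $r \le M$. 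If $c = 0$ we may take $r = 0$ and any $x \in Q_v$, so assume $c \ne 0$.

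To bound $r$, first note $r = \pi_1(c)/\pi_1(x) = \pi_1(c)/L$, since $x \in Lv + E_1^c$ means $\pi_1(x) = L$ (under the identification $E_1 \cong \mathbb R$ via $v$). So it suffices to show $\pi_1(c) \le ML = 2k\sqrt d\, L$. Now $\pi_1(c) = \sum_{i=1}^k \alpha_i \pi_1(z_i) \le \sum_{i=1}^k \pi_1(z_i)$ because $0 \le \alpha_i \le 1$ and each $\pi_1(z_i) > 0$ (shown in Step 3). For each $i$, by (\ref{norm-comparison}) and the triangle inequality $\pi_1(z_i) \le |\pi_1(z_i)|_\alpha \le |z_i|_\alpha \le |v_i|_\alpha + |z_i - v_i|_\alpha \le \sqrt d\, L + \ell$, using the first inequality of Lemma \ref{upper-bound-v} and $|z_i - v_i|_\alpha \le \ell$. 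Since $L > 3\ell$ by (\ref{L-ell}), we have $\sqrt d\, L + \ell \le \sqrt d\, L + L \le 2\sqrt d\, L$ (as $d \ge 3$ forces $\sqrt d \ge 1$). Summing over the $k$ vertices gives $\pi_1(c) \le 2k\sqrt d\, L = ML$, hence $r \le M$, as desired.

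I do not expect a serious obstacle here; the only point requiring a little care is making sure the ray-to-hyperplane intersection is well-defined and genuinely lands in $Q_v$, which is why one wants to invoke the description (\ref{cone-C}) of $\mathcal C$ rather than re-derive it. A mild subtlety is the degenerate case $c=0$, handled above by taking $r=0$. Everything else is the triangle inequality together with the already-established bounds $|v_i|_\alpha \le \sqrt d\, L$ and $L > 3\ell$.
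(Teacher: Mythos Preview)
Your proof is correct and follows essentially the same approach as the paper: both recognise that $C \subset \mathcal{C}$ is the cone over $Q_v$, so any nonzero $c \in C$ lies on a ray hitting $Q_v$, and then bound the scale factor using Lemma~\ref{upper-bound-v} together with $|z_i - v_i|_\alpha \le \ell$ and $L > 3\ell$. The only cosmetic difference is that the paper bounds the ratio $r$ via $|c|_\alpha / |w|_\alpha \le (2k\sqrt{d}L)/L$, whereas you compute $r = \pi_1(c)/L$ directly and bound $\pi_1(c)$; your route is marginally cleaner but the ingredients and structure are the same.
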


\begin{proof}
Since $Q_v$ is the convex hull of $w_i$, the ray from the origin and passing thorough an arbitrary point $p$ of $C$ intersects $Q_v$ at some point $w$. Therefore, if we define $M_1$ as 
\[ M_1 = \frac{\max_{p \in C} |p|_\alpha}{\min_{w \in Q_v} |w|_\alpha}, \]
then 
\[ C \subset \{ rx \in \mathbb{R}^d \hspace{2mm} | \hspace{2mm} x \in Q_v, \hspace{1mm} \text{and} \hspace{2mm} 0 \leq r \leq M_1 \}.  \]
It is enough to show that $M_1 \leq M$. For every point $w$ in $Q_v$ 
\[ |w|_\alpha \geq |\pi_1(w)|_\alpha = L. \] 
On the other hand if $p$ is a point in $C$, then by the triangle inequality we have 
\[ |p|_\alpha \leq k \cdot \max_{i} |z_i|_\alpha \leq k \cdot \max_i(|v_i|_\alpha+\ell) \leq k (\sqrt{d}L + \ell) < 2k\sqrt{d}L, \]
where we used Lemma \ref{upper-bound-v} for the implication $|v_i|_\alpha \leq \sqrt{d} L$. Combining the previous inequalities gives
\[ M_1 = \frac{\max_{p \in C} |p|_\alpha}{\min_{w \in Q_v} |w|_\alpha} \leq \frac{2k \sqrt{d}L}{L} = 2 k \sqrt{d}= M. \] 
\end{proof}

\begin{lem}
	We have 
	\[ \textsc{Vol}(Q_v) \leq 2^{2d-2} \times \prod_{j>1}^{r} R_j \times \prod_{j >r}R_j^2, \]
	where $\textsc{Vol}(Q_v)$ is the $(d-1)$-dimensional volume of $Q_v$.
	\label{upper-bound-Qv}
\end{lem}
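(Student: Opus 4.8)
The plan is to bound the $(d-1)$-dimensional volume of $Q_v$ by comparing it to a product of balls (or boxes) in the complementary subspace $E_1^c = \bigoplus_{j>1} E_j$, one factor for each place $j>1$. Recall that $Q_v$ lies in the affine hyperplane $Lv + E_1^c$, so its $(d-1)$-volume is computed with respect to the Euclidean structure that $q_\alpha$ induces on $E_1^c$ (which, by the orthogonal decomposition \eqref{norm-sum}, is the orthogonal product of the Euclidean structures on the individual $E_j$). Since $Q_v$ is the convex hull of the $w_i$, Lemma \ref{length-of-projection-w} tells us that $\pi_j(w_i) \in E_j$ has $q_\alpha$-length strictly less than $2R_j$ for every $i$ and every $j>1$; hence $Q_v$ is contained in the product set $\prod_{j>1} \{ y \in E_j : |y|_\alpha < 2R_j\}$.

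The main step is then just the volume of this product set. For a real place $j$ (with $1<j\le r$), $E_j$ is one-dimensional and the slab $\{|y|_\alpha < 2R_j\}$ has length $4R_j$. For a complex place $j$ (with $j>r$), $E_j$ is two-dimensional and the disc $\{|y|_\alpha < 2R_j\}$ has area $\pi (2R_j)^2 = 4\pi R_j^2 < 16 R_j^2$. Multiplying these contributions over all $j>1$ gives
\[
\textsc{Vol}(Q_v) \;\le\; \prod_{1<j\le r} 4R_j \;\cdot\; \prod_{j>r} 16 R_j^2 \;=\; 4^{r-1}\cdot 16^{s}\cdot \prod_{1<j\le r} R_j \cdot \prod_{j>r} R_j^2.
\]
Since $r-1 + 2s \le r + 2s - 1 = d-1$ and, more wastefully, $4^{r-1}\cdot 16^s = 2^{2(r-1)+4s} \le 2^{2d-2}$ (using $2(r-1)+4s \le 2(r+2s) - 2 = 2d-2$), we obtain the claimed bound $\textsc{Vol}(Q_v) \le 2^{2d-2}\prod_{1<j\le r}R_j \prod_{j>r}R_j^2$.

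I do not expect any serious obstacle here; the only point requiring a little care is making sure the $(d-1)$-dimensional volume is being measured with the correct (induced) inner product, so that the orthogonal product decomposition of $E_1^c$ into the $E_j$ is legitimate and the volume of the product box factors as the product of the lower-dimensional volumes. This is exactly the content of \eqref{norm-sum}, restricted to $E_1^c$. The constant $16$ for the complex discs (rather than $4\pi \approx 12.57$) is deliberately loose so that everything collapses cleanly into the single power $2^{2d-2}$; sharper constants are possible but unnecessary for the final bound in Theorem \ref{thm:upper-bound}.
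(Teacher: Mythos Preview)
Your proposal is correct and follows essentially the same approach as the paper: bound $Q_v$ by the product of balls/intervals in the orthogonal factors $E_j$ using Lemma~\ref{length-of-projection-w}, then multiply the resulting one- and two-dimensional volumes and collapse the constants via $r+2s=d$. The only cosmetic difference is that the paper phrases the first step as bounding $\textsc{Length}(\pi_j(Q_v))$ and $\textsc{Area}(\pi_j(Q_v))$ separately (using $\pi<4$ for the complex factors), whereas you bound by the containing product set directly; also note that your inequality $2(r-1)+4s\le 2d-2$ is in fact an equality.
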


\begin{proof}
	We have
	\begin{align*}
	\textsc{Vol}(Q_v) \leq \prod_{j>1}^{r} \textsc{Length}(\pi_j(Q_v)) \times \prod_{j >r} \textsc{Area}(\pi_j(Q_v)), 
	\end{align*}
	where $\textsc{Vol}$, $\textsc{Length}$ and $\textsc{Area}$ are with respect to the inner product $q_\alpha$. For $j>1$, define $\textbf{R}_j$ as
	\[ \textbf{R}_j = \max_{w \in Q_v} |\pi_j(w)|_\alpha.  \] 
	Therefore 
	\[ \textsc{Vol}(Q_v) \leq \prod_{j>1}^{r} (2\textbf{R}_j) \times \prod_{j >r} (\pi \textbf{R}_j^2).  \]
	By Lemma \ref{length-of-projection-w} and the triangle inequality we have $\textbf{R}_j < 2 R_j$. The lemma now follows from the inequalities $\textbf{R}_j < 2 R_j$ and $\pi < 4$, and the equality $r+2s=d$.
\end{proof}

\begin{lem}
	We have 
	\[ \textsc{Vol}(C) < 2^{d^2+6d-1} \times d^{\frac{ds}{4}+\frac{3d}{2} -1} \times \frac{\ell^{d}}{(1-\rho)^{d+\frac{ds}{2}}}, \]
	where $C$ is defined as in (\ref{compact-C}).
\end{lem}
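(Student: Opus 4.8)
The plan is to combine the three preceding lemmas of Step 6 into a single volume estimate, and then substitute the explicit bounds on $R_j$ from Step 2. First I would use Lemma \ref{inclusion-C}: since $C$ is contained in the union of segments $[0,M]\cdot Q_v$ with $M = 2k\sqrt d$, the region $C$ lies in the cone over $Q_v$ truncated at radius-parameter $M$, whose $d$-dimensional volume is at most $\frac{M}{d}$ times the $(d-1)$-dimensional volume of $Q_v$ — more crudely, $\textsc{Vol}(C) \le M \cdot \textsc{Vol}(Q_v) \cdot (\text{a bounded factor})$, since $\pi_1$ ranges over $[0,L]$ on $Q_v$ itself and scaling by $r\le M$ multiplies the base by $r^{d-1}$; I would simply bound $\textsc{Vol}(C) \le M^d \cdot \textsc{Vol}(Q_v)/L^{?}$ — the cleanest route is $\textsc{Vol}(C) \le \frac{M}{d}\,L\,\textsc{Vol}(Q_v)$ is false dimensionally, so instead: the set $\{rx : x\in Q_v,\ 0\le r\le M\}$ has volume $\int_0^M r^{d-1}\,dr \cdot (d\text{-dim content of the cone over }Q_v\text{ per unit})$, which is at most $M^d \cdot \textsc{Vol}(Q_v)$ (bounding $\pi_1(x)\le L$ and the base content by $\textsc{Vol}(Q_v)$). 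I will use the bound $\textsc{Vol}(C) \le M^d \cdot \textsc{Vol}(Q_v)$, and then Lemma \ref{upper-bound-Qv} to get $\textsc{Vol}(Q_v) \le 2^{2d-2}\prod_{1<j\le r} R_j \prod_{j>r} R_j^2$.

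Next I would plug in the explicit sizes. From \eqref{upper-bound-R-real}, $R_j \le \frac{8\sqrt d}{1-\rho}\,\ell$ for $1<j\le r$, and from \eqref{upper-bound-R-complex} and \eqref{R-complex}, $R_j = N_j^2\ell \le \frac{16\sqrt d}{1-\rho}\,\ell$ for $j>r$. Hence $\prod_{1<j\le r}R_j \cdot \prod_{j>r}R_j^2 \le \big(\frac{16\sqrt d}{1-\rho}\big)^{(r-1)+2s}\ell^{(r-1)+2s} = \big(\frac{16\sqrt d}{1-\rho}\big)^{d-1}\ell^{d-1}$, using $r-1+2s = d-1$. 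For $M = 2k\sqrt d$ I use $k = 2^{r-1}\prod_{j>r}N_j$ from \eqref{number-of-vertices-k}, and $N_j^2 \le \frac{16\sqrt d}{1-\rho}$ gives $N_j \le \frac{4 d^{1/4}}{\sqrt{1-\rho}}$, so $k \le 2^{r-1}\big(\frac{4d^{1/4}}{\sqrt{1-\rho}}\big)^{s} \le \big(\frac{4d^{1/4}}{\sqrt{1-\rho}}\big)^{r-1+s}$ and thus $M = 2k\sqrt d \le 2\sqrt d \big(\frac{4d^{1/4}}{\sqrt{1-\rho}}\big)^{d}$ (crudely bounding $r-1+s\le d$ and absorbing constants). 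Then $M^d$ contributes roughly $\big(\frac{4d^{1/4}}{\sqrt{1-\rho}}\big)^{d^2}$ times a $d^{d/2}$-type factor, and multiplying by $2^{2d-2}\big(\frac{16\sqrt d}{1-\rho}\big)^{d-1}\ell^{d-1}$ and a leftover factor of $L \le \frac{16\sqrt d}{1-\rho}\ell$ from the $\pi_1$-direction gives $\textsc{Vol}(C)$ bounded by a power of $2$, a power of $d$ (the exponents $\tfrac{ds}{4}+\tfrac{3d}{2}-1$ come from collecting the $d^{1/4}$ factors in $M^d$ — that is $\frac{d^2}{4}$ worth of $d^{1/4}$'s, i.e. roughly $d^{d/4}$ wait, $s\le d/2$, so $d^{ds/4}$ — plus the $d^{3d/2}$ from $\sqrt d$ factors) divided by $(1-\rho)^{d+ds/2}$ times $\ell^d$.

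The main obstacle is purely bookkeeping: carefully tracking the exponents of $2$, of $d$, and of $(1-\rho)^{-1}$ so that they land exactly at $d^2+6d-1$, $\frac{ds}{4}+\frac{3d}{2}-1$, and $d+\frac{ds}{2}$ respectively. Concretely, I would write $\textsc{Vol}(C) \le M^d \cdot 2^{2d-2} \cdot \prod_{1<j\le r} R_j \cdot \prod_{j>r} R_j^2 \cdot L$ (the extra $L$ accounting for the $\pi_1$-extent of the truncated cone, since $Q_v$ sits at height $L$), substitute $M \le 2\sqrt d\,(4d^{1/4})^{d}(1-\rho)^{-d/2}$, $R_j\le 16\sqrt d\,\ell\,(1-\rho)^{-1}$, $L \le 16\sqrt d\,\ell\,(1-\rho)^{-1}$, collect: the $(1-\rho)^{-1}$ powers are $\frac{d}{2}\cdot d$ from $M^d$ — no wait, $M$ has $(1-\rho)^{-d/2}$ so $M^d$ has $(1-\rho)^{-d^2/2}$, which is too big; this signals that the genuinely sharp step is using $s\le d/2$ inside the exponent of $M$, i.e. $k$ only involves $s$ complex places not $d$, so $k \le (4d^{1/4}(1-\rho)^{-1/2})^{d/2}$ roughly and $M^d$ has $(1-\rho)^{-d^2/4}$ — still needing the stated $(1-\rho)^{d+ds/2}$, so I must be even more careful and track $r-1+s = d-s-1+s$... the correct exponent of $(1-\rho)^{-1}$ in $k$ is $\frac{r-1}{1}\cdot 0 + \frac{s}{2}$ from the $N_j$'s, giving $(1-\rho)^{-s/2}$, hence $M^d$ gives $(1-\rho)^{-ds/2}$, and $\prod R_j \cdot \prod R_j^2 \cdot L$ gives $(1-\rho)^{-(d-1)-1} = (1-\rho)^{-d}$, for a total of $(1-\rho)^{-d-ds/2}$, matching. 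The $d$-exponent similarly: $M^d$ gives $d^{d/2}\cdot d^{ds/4}$ (from $\sqrt d$ and $d^{1/4}$ per $N_j$), $\prod R_j$-type terms give $d^{(d-1)/2}$, $L$ gives $d^{1/2}$, total $d^{ds/4 + d/2 + (d-1)/2 + 1/2} = d^{ds/4 + d - ... }$, hmm close to $d^{ds/4+3d/2-1}$ after the $2$'s are folded in — the $2$-exponent collects $2^{2d-2}$, $4^{d}=2^{2d}$ from $M^d$, $2^d\cdot\ldots$ from $k$'s $2^{r-1}$ raised to the $d$, and the $16=2^4$ factors, yielding $2^{d^2+6d-1}$ once everything is multiplied. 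So the whole proof is: state the chain of inequalities $\textsc{Vol}(C)\le M^d\,\textsc{Vol}(Q_v)\,L$, invoke Lemmas \ref{inclusion-C}, \ref{upper-bound-Qv}, \ref{length-of-projection-w} and bounds \eqref{upper-bound-R-real}, \eqref{upper-bound-R-complex}, \eqref{R-complex}, \eqref{L}, \eqref{number-of-vertices-k}, then grind the exponent arithmetic, using $r-1+2s=d-1$, $s\le d/2$, and the elementary estimate for $N_j$.
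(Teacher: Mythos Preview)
Your overall plan matches the paper's: bound $C$ by the truncated cone $\{rx : x\in Q_v,\ 0\le r\le M\}$ via Lemma~\ref{inclusion-C}, bound $\textsc{Vol}(Q_v)$ via Lemma~\ref{upper-bound-Qv}, then substitute the explicit estimates \eqref{upper-bound-R-real}, \eqref{upper-bound-R-complex}, \eqref{R-complex}, \eqref{number-of-vertices-k} and collect exponents using $r+2s=d$. That is exactly what the paper does.

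The genuine gap is your handling of the cone volume. You reject ``$\tfrac{M}{d}\,L\,\textsc{Vol}(Q_v)$'' as ``false dimensionally'' (it is not---$M$ is a pure number, $L$ a length, $\textsc{Vol}(Q_v)$ a $(d{-}1)$-volume---it is simply the wrong power of $M$), then try $M^d\,\textsc{Vol}(Q_v)$ (which \emph{is} dimensionally wrong), and finally settle on $M^d\,L\,\textsc{Vol}(Q_v)$ with an ad hoc justification. The correct computation is clean: the set $\{rx : x\in Q_v,\ 0\le r\le M\}$ is the cone with apex $0$ and base $M\cdot Q_v$, which sits in the hyperplane at height $ML$ along $E_1$ and has $(d{-}1)$-volume $M^{d-1}\textsc{Vol}(Q_v)$; hence its $d$-volume is
\[
\frac{1}{d}\,(ML)\,\bigl(M^{d-1}\textsc{Vol}(Q_v)\bigr)\;=\;\frac{M^d L}{d}\,\textsc{Vol}(Q_v).
\]
The paper uses exactly this. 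Your bound $M^d L\,\textsc{Vol}(Q_v)$ is a valid upper bound but is $d$ times larger, and that factor of $d$ is not slack you can afford: it is precisely the source of the ``$-1$'' in the exponent $\tfrac{ds}{4}+\tfrac{3d}{2}-1$. Without it you obtain $d^{\frac{ds}{4}+\frac{3d}{2}}$, which does not prove the lemma as stated (and downstream the crude estimate $\tfrac{ds}{4}+3d-1<d^2$ used for Theorem~\ref{thm:upper-bound} would fail at $d=3$).

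Once you fix the cone-volume formula, the exponent bookkeeping you sketch is on the right track but must be executed exactly, not ``roughly'': from $M^d$ one gets $2^{d^2}\,d^{ds/4+d}\,(1-\rho)^{-ds/2}$ (using $2^r\cdot 4^s=2^{r+2s}=2^d$); from $\tfrac{L}{d}\cdot 2^{2d-2}\prod R_j\prod R_j^2$ one gets $2^{2d+3r+8s+1}\,d^{d/2-1}\,\ell^d\,(1-\rho)^{-d}$; multiplying and using $3r+8s\le 4d$ gives the stated bound.
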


\begin{proof}
	Note that
	\begin{eqnarray}
	M = 2 k \sqrt{d} = 2^r \times \prod_{j >r} N_j \times \sqrt{d} \leq 2^r \times \Big(\frac{16 \sqrt{d}}{1- \rho}\Big)^{\frac{s}{2}} \times \sqrt{d} < 2^d \times \frac{d^{\frac{s}{4}+1}}{(1-\rho)^\frac{s}{2}}, 
	\end{eqnarray}
	where we used (\ref{number-of-vertices-k}), (\ref{upper-bound-R-complex}), and the identity $r+2s=d$. By Lemma \ref{inclusion-C}
	\begin{align*}
	\textsc{Vol}(C) &\leq \textsc{Vol}(\{ rx \in \mathbb{R}^d \hspace{1mm} | \hspace{1mm} x \in Q_v, \hspace{1mm} \text{and} \hspace{2mm} 0 \leq r \leq M \} ).
	\end{align*}
	The upper bound above is the volume of a cone over $M \cdot Q_v$, where $M \cdot Q_v$ denotes a dilation of $Q_v$ with the factor of $M$ from the point $0 \in \mathbb{R}^d$. The height of this cone is $ML$, since the height of $Q_v$ measured perpendicularly from its apex along the $E_1$-axis is $L$. Hence, the volume of the cone is equal to 
	\[ \frac{(ML)\textsc{Vol}(M \cdot Q_v)}{d} = \frac{M^d L }{d} \textsc{Vol}(Q_v).  \]
	Therefore, we have 
	
	\begin{align*}
	\textsc{Vol}(C) \leq  \frac{M^d L}{d} \times \textsc{Vol}(Q_v) & \leq \Big(2^d \times \frac{d^{\frac{s}{4}+1}}{(1-\rho)^\frac{s}{2}} \Big)^d \times \frac{L}{d} \times 2^{2d-2} \times \prod_{j>1}^{r} R_j \times \prod_{j >r}R_j^2,
	\end{align*}
	where we used Lemma \ref{upper-bound-Qv}. Moreover, using (\ref{upper-bound-R-real}), (\ref{upper-bound-R-complex}), and (\ref{R-complex}) we have
	\begin{align*}
	\prod_{j>1}^{r} R_j \times \prod_{j >r}R_j^2 &\leq \prod_{j>1}^{r} \Big(\frac{8\sqrt{d} \ell }{1-\rho}\Big) \times \prod_{j >r} \Big(\frac{16 \sqrt{d} \ell }{1 - \rho}\Big)^2 \\
	& = 2^{3r +8s - 3} \times d^{\frac{d-1}{2}}  \times \frac{\ell^{d-1}}{(1-\rho)^{d-1}}.
	\end{align*}
	Combining with the previous upper bound for $\textsc{Vol}(C)$, and using $L < \frac{16 \sqrt{d} \ell}{1-\rho}$, we have
	\begin{eqnarray*}
		\textsc{Vol}(C) \leq 2^{d^2 +2d +3r + 8s -1} \times d^{\frac{ds}{4}+\frac{3d}{2} -1} \times \frac{\ell^{d}}{(1-\rho)^{d+\frac{ds}{2}}} \leq 2^{d^2+6d-1} \times d^{\frac{ds}{4}+\frac{3d}{2} -1} \times \frac{\ell^{d}}{(1-\rho)^{d+\frac{ds}{2}}},
	\end{eqnarray*}
	where we used the relation $r+2s =d$. 
	
\end{proof}

Therefore, the number of integral points in $C$ is at most 
\begin{align*}
\frac{\textsc{Vol}(C)}{\text{det}(\mathcal{O}_\mathbb{K} , q_\alpha)^\frac{1}{2}} \cdot (d+1)! &\leq 2^{d^2+6d} \times d^{\frac{ds}{4}+\frac{5d}{2} -1} \times \frac{\ell^d}{\text{det}(\mathcal{O}_\mathbb{K} , q_\alpha)^\frac{1}{2}}  \times \frac{1}{(1-\rho)^{d + \frac{ds}{2}}} \\
& = 2^{d^2+6d} \times d^{\frac{ds}{4}+\frac{5d}{2} -1} \times \tau(\mathcal{O}_\mathbb{K}, q_\alpha)^\frac{d}{2} \times \frac{1}{(1-\rho)^{d + \frac{ds}{2}}}.
\end{align*}
In the first line above, we used the inequality 
\[ (d+1)! = (d+1) d! < (2d) d^{d-1}= 2d^d. \] 
The second line above uses the definition of $\ell$, and Definition \ref{def:tau-min}. By taking infimum over all $\alpha \in \mathfrak{B}$, we obtain the upper bound
\begin{eqnarray*}
2^{d^2+6d} \times d^{\frac{ds}{4}+\frac{5d}{2} -1} \times \frac{\tau_{\min}(\mathcal{O}_\mathbb{K})^\frac{d}{2}}{ (1-\rho)^{d+\frac{ds}{2}}},
\end{eqnarray*}
for the number of integral points in $C$ and hence for the dimension of the matrix $A$. 

Bayer Fluckiger showed in \cite[Propositon 4.2]{BAYERFLUCKIGER2006305}, as a corollary of the work of Banaszczyk \cite[Theorem 2.2]{banaszczyk1993new}, that for any $\alpha \in \mathfrak{B}$
\begin{eqnarray}
\tau(\mathcal{O}_\mathbb{K}, q_\alpha) \leq \frac{d}{4} \cdot D_\mathbb{K}^\frac{1}{d}.
\label{upper-bound-tau}
\end{eqnarray}
This gives the upper bound 
\[  2^{d^2+5d} \times d^{\frac{ds}{4}+3d -1} \times \frac{\sqrt{D_\mathbb{K}}}{ (1-\rho)^{d+\frac{ds}{2}}} \]
for the dimension of $A$. Both parts of the theorem now follow from the crude estimates
\[ 2^{d^2+6d} \leq 8^{d^2}, \hspace{6mm} \frac{ds}{4}+3d -1 <  d^2, \hspace{6mm} d+ \frac{ds}{2}< d^2. \]

\end{proof}

The bound in Theorem \ref{thm:upper-bound} is perhaps enormous compared to the Perron--Frobenius degree, so we have not tried to make the constants optimal. The point is to have an explicit bound in terms of data that we believe are relevant to the Perron--Frobenius degree; see Question \ref{arithmetic-information}.

\begin{remark}
 
Denote the \emph{covering conjecture} for dimension $d$ by $C_d$.

\begin{conj}[Covering conjecture]
	The covering radius of any well-rounded unimodular lattice $L$ in $\mathbb{R}^d$ (with the standard norm $|\cdot|$) satisfies
	\[ \sup_{x \in \mathbb{R}^d} \inf_{y \in L} |x -y| \leq \frac{\sqrt{d}}{2}.  \]
	Equality happens if and only if $L = g \cdot \mathbb{Z}^d$ for some $g \in \mathrm{SO}_d(\mathbb{R})$. 
\end{conj}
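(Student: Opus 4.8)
This statement is a Euclidean version of \emph{Minkowski's conjecture} on the inhomogeneous minimum (the Remak--Davenport circle of ideas), and it should be treated as a hard open problem rather than something with a short proof: it is known unconditionally only for small $d$, through a long line of work (Minkowski, Remak, Dyson, Skubenko, Woods, Hans-Gill--Raka--Sehmi, Kathuria--Raka, $\dots$), and remains open for large $d$. So the ``proof'' I would attempt is really the classical descent strategy, and I will flag where it stalls.

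\textbf{Step 1 (an extremal well-rounded lattice).} Work in the space $X_d = \mathrm{SL}_d(\mathbb{R})/\mathrm{SL}_d(\mathbb{Z})$ of unimodular lattices, with its continuous covering-radius function $\mu$. A well-rounded unimodular lattice $L$ has $\lambda_1(L)\ge 1$: any $d$ linearly independent minimal vectors of $L$ generate a sublattice of some finite index $k$ whose covolume is $k$, and by Hadamard $k\le\lambda_1(L)^d$. Hence well-rounded unimodular lattices form a bounded, pre-compact family in $X_d$, and — using Remak's argument that a maximizing sequence cannot degenerate — the maximum of $\mu$ over them is attained at some well-rounded $L_0$. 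If $\mu(L_0)<\sqrt d/2$ there is nothing to prove; so assume $\mu(L_0)\ge\sqrt d/2$, and the goal becomes to show $\mu(L_0)=\sqrt d/2$ and $L_0\in\mathrm{SO}_d(\mathbb{R})\cdot\mathbb{Z}^d$. Maximality then pins the geometry of $L_0$: a deep hole $x_0$ realizing $\mu(L_0)$ must have many nearest lattice points, for otherwise a small variation of $L_0$ in $X_d$ (a shear, corrected to stay unimodular) would push $x_0$ away from all of them and strictly increase $\mu$; this yields enough linear relations among the minimal vectors and deep-hole vectors to single out a coordinate system in which $L_0$ looks essentially like $\mathbb{Z}^d$.

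\textbf{Step 2 (descent on dimension and the equality case).} Following Woods, choose a minimal vector $e$ of $L_0$ (so $|e|\ge 1$). Write a deep hole of $L_0$ as a component along $e$, of length at most $\tfrac12|e|$ after subtracting a suitable multiple of $e$, plus a component governed by the projected lattice $\pi_{e^\perp}(L_0)$, which after rescaling is a well-rounded unimodular lattice in $\mathbb{R}^{d-1}$; the inductive hypothesis $C_{d-1}$ bounds this horizontal part. Combining the two contributions (and using $|e|\ge 1$) should give $\mu(L_0)\le\sqrt d/2$; in the boundary case one deduces $|e|=1$ for \emph{every} minimal vector with all relations exactly those of $\mathbb{Z}^d$, and a Hajós--Minkowski rigidity argument forces $L_0\in\mathrm{SO}_d(\mathbb{R})\cdot\mathbb{Z}^d$. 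The induction is anchored by verifying $d\le 2$ (or $d\le 3$) directly among the finitely many reduced well-rounded forms.

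\textbf{Main obstacle.} Everything hinges on Step 2: controlling how a deep hole of $L_0$ splits relative to a short vector $e$ is genuinely delicate, and the crude split wastes exactly the amount one is trying to save, which is why the descent has only ever been carried out dimension by dimension with ad hoc case analysis, not uniformly in $d$. The cleanest known reformulation, due to McMullen, avoids the descent: the inequality $C_d$ (with the equality case established unconditionally via topological dimension theory) would follow from the statement that every bounded orbit of the diagonal group $A<\mathrm{SL}_d(\mathbb{R})$ on $X_d$ has compact closure; but that is itself a famous open conjecture (a case of conjectures of Margulis and of Cassels--Swinnerton-Dyer), so it relocates the difficulty rather than removing it.
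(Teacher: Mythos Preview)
The statement you are addressing is labelled \emph{Conjecture} in the paper, and the paper offers no proof of it: it is quoted only inside Remark~\ref{rem:Minkowski} as background for the inequality $\tau_{\min}(\mathcal{O}_\mathbb{K})\le d/4$ in the totally real case. So there is no ``paper's own proof'' to compare your proposal against.

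More importantly, the paper records (immediately after stating the conjecture) that the covering conjecture is \emph{known to be false} for $d\ge 30$, citing \cite{regev2017}, while it has been verified for $d\le 10$. Your proposal treats the statement as ``a hard open problem'' that is ``known unconditionally only for small $d$'' and then outlines the Remak--Woods descent as a potential route to a general proof; but since counterexamples exist in high dimension, no such proof can succeed, and the descent in your Step~2 must break down for some $d$ between $11$ and $30$. Concretely, the place where your sketch is not robust is exactly the one you flag: the decomposition of a deep hole along a shortest vector $e$ and the claim that the projected lattice $\pi_{e^\perp}(L_0)$, after rescaling, is again well-rounded and unimodular with the inductive bound applying cleanly. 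That step does not in general preserve well-roundedness or give the needed inequality, and in high dimension the loss is fatal rather than merely technical.

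So the correct response here is not to attempt a proof at all, but to note that the paper states this as a conjecture, records its status ($d\le 10$ proved, $d\ge 30$ disproved), and uses it only to motivate the bound on $\tau_{\min}$ for totally real fields of small degree.
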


See McMullen \cite{mcmullen2005minkowski} for the definition of \emph{well-rounded lattice}, and the application of the covering conjecture to \emph{Minkowski's conjecture}. The covering conjecture is proved for $d \leq 10$; see \cite{kathuria2020conjectures,kathuria2016conjectures} and the references therein. Moreover, it is known to be false for $d \geq 30$; see \cite{regev2017}.
	
In \cite[page 313]{BAYERFLUCKIGER2006305}, Bayer Fluckiger pointed out that McMullen's results \cite{mcmullen2005minkowski} together with the covering conjecture $C_d$ imply that for any totally real $\lambda$ of degree $d$ we have $\tau_{\min} (\mathcal{O}_\mathbb{K}) \leq \frac{d}{4}$. Hence, for any $d \leq 10$ and for any totally real Perron number $\lambda$ of degree $d$
\[ d_{PF}^{irr}(\lambda) \leq \Big( \frac{8 d}{1 - \rho} \Big)^{d^2}.  \]

It would be interesting to know which number fields satisfy an inequality similar to that of $\tau_{\min} (\mathcal{O}_\mathbb{K}) \leq \frac{d}{4}$ with upper bound only depending on the degree $d$. As pointed out kindly by McMullen to me, it is easy to see that such a bound does not exist for imaginary quadratic fields $\mathbb{Q}(\sqrt{n})$ for $n<0$. On the other hand, Bayer Fluckiger showed that the inequality $\tau_{\min} (\mathcal{O}_\mathbb{K}) \leq \frac{d}{4}$ holds for fields of the form $\mathbb{K} = \mathbb{Q}(\zeta_{p^r})$ or $\mathbb{K} = \mathbb{Q}(\zeta_{p^r}+\zeta^{-1}_{p^r})$, where $p$ is an odd prime number, $r$ is a natural number, and $\zeta_{p^r}$ is a primitive $p^r$th root of unity; see respectively \cite[page 319, line 4]{BAYERFLUCKIGER2006305} and \cite[Lemma 8.5]{BAYERFLUCKIGER2006305}.

\label{rem:Minkowski}
\end{remark}


\begin{ex}[Pisot numbers]
A real algebraic integer $\alpha>1$ is called \emph{Pisot} if all other Galois conjugates of $\alpha$ lie in the unit circle $\{ z \in \mathbb{C} \hspace{2mm} | \hspace{2mm} |z|<1\}$. Note a Pisot number is always Perron. We collect a few facts about Pisot numbers. 

\begin{enumerate}
	\item A number field is called \emph{real} if it has at least one real place. It is clear that every number field containing a Pisot number with the same degree should be real. Pisot \cite{pisot1938repartition} proved that every real number field $\mathbb{K}$ of degree $d$ contains a Pisot number of degree $d$. Moreover, the set of Pisot numbers of degree $d$ in $\mathbb{K}$ is closed under multiplication; see \cite[Corollary in page 33]{meyer1972algebraic}. 
	\item The smallest Pisot number is the largest root $p$ of $x^3 - x -1$ and is known as the \emph{plastic constant}. This was identified as the smallest known Pisot number by Salem \cite{salem1944remarkable}, and Siegel proved it to be the smallest possible Pisot number \cite{siegel1944algebraic}. In particular, if $\lambda$ is a Pisot number with spectral ratio $\rho$, then $\rho < p^{-1}$ implying that
	\[  \frac{1}{1-\rho} < \frac{1}{1 - p^{-1}}. \] 
\end{enumerate} 
	
Let $\mathbb{K}$ be a real number field of degree $d \geq 3$. Let $\lambda$ be any Pisot number in $\mathbb{K}$ of degree $d$, and note that $\mathbb{Q}(\lambda) = \mathbb{K}$. By Theorem \ref{thm:upper-bound}, $d_{PF}^{irr}(\lambda)$ is bounded above by 
\[ \Big(\dfrac{8d}{1-p^{-1}}\Big)^{d^2} \sqrt{D_\mathbb{K}}. \]
Note this upper bound only depends on $\mathbb{K}$ and not on the Pisot number $\lambda \in \mathbb{K}$. Every number field $\mathbb{K}$ has only finitely many subfields $\mathbb{K}' \subset \mathbb{K}$. Hence there is a similar upper bound depending only on $\mathbb{K}$ for an arbitrary Pisot number $\lambda \in \mathbb{K}$ (with any degree).
\label{pisot}
\end{ex}

\section{Primitive matrices}
\label{sec:primitive}
In this section, we use Theorem \ref{thm:upper-bound} to derive an upper bound for the Perron--Frobenius degree of a Perron number $\lambda$. A useful observation is that if there exists an irreducible matrix $A$ with spectral radius $\lambda -1$, then $\lambda$ is the spectral radius of the \emph{primitive} matrix $I+A$. This is because $I+A$ is irreducible and has positive trace; hence it is primitive. 

\newtheorem*{thm:primitive}{Theorem \ref{thm:primitive}}
\begin{thm:primitive}
Let $\mathbb{\lambda}$ be a Perron number of degree $d \geq 3$ and spectral ratio $\rho$. Set $\mathbb{K}:= \mathbb{Q}(\lambda)$. Denote the bound from Theorem \ref{thm:upper-bound} by $B(\mathbb{K}, \rho)$, and let $\kappa(\mathbb{K}, \rho)$ be as in Notation \ref{notation:kappa}. The Perron--Frobenius degree of $\lambda$ is bounded above by 
\[ \max \{ 2^{d^2} B(\mathbb{K}, \rho), \kappa(\mathbb{K},\rho) \}. \]

\end{thm:primitive}

\begin{proof}
We may assume that $\lambda \geq M = 1+ \frac{4}{1-\rho}$. First we show that $\lambda -1$ is Perron. The Galois conjugates of $\lambda -1$ are equal to $\lambda_i -1$. We have
\begin{align*}
\frac{|\lambda_i -1|}{\lambda -1} &\leq \frac{|\lambda_i|+1}{\lambda -1} \leq \frac{\rho \lambda +1}{\lambda -1} = \rho + \frac{\rho +1}{\lambda -1} \leq \\ 
&\rho + \frac{2}{\lambda -1} \leq \rho + \frac{1-\rho}{2} <1,
\end{align*}
where we used the assumptions $\rho <1$ and $\lambda \geq 1+ \frac{4}{1-\rho}$. Denote the spectral ratio for $\lambda -1$ by $\mu$. Note that 
\begin{align}
1- \mu \geq \frac{1 - \rho}{2}. 
\label{mu}
\end{align}
In fact, we just showed that for every $i$
\begin{align*}
\frac{|\lambda_i -1|}{\lambda -1} \leq \rho + \frac{1-\rho}{2},
\end{align*}
hence

\begin{align*}
1 - \frac{|\lambda_i -1|}{\lambda -1} \geq 1 - (\rho + \frac{1- \rho}{2}) = \frac{1- \rho}{2}.
\end{align*}
Inequality (\ref{mu}) in particular implies that
\begin{align}
B(\mathbb{K}, \mu) \leq 2^{d^2} B(\mathbb{K}, \rho).
\end{align}
Now $\lambda -1$ is a Perron number of spectral ratio $\mu$ and it generates the same number field $\mathbb{K}$ as $\lambda$. By Theorem \ref{thm:upper-bound}, there is an integral irreducible matrix $A$ with spectral radius $\lambda -1$ and dimension at most $B(\mathbb{K}, \mu)$. Then $I + A$ is a primitive matrix of the same dimension and with spectral radius $\lambda$. 

\end{proof}

\section{Questions}
\label{sec:questions}
In Theorem \ref{thm:upper-bound}, for technical reasons, we constructed an integral \emph{irreducible} matrix with spectral radius equal to a given Perron number, although it would have been more natural to construct an integral \emph{primitive} matrix instead. This motivates the following:

\begin{question} 
\begin{enumerate}
\item Are there upper bounds for $d_{PF}(\lambda)$ in terms of $d_{PF}^{irr}(\lambda)$?
\item Does $d_{PF}(\lambda) = d_{PF}^{irr}(\lambda)$ always hold?
\end{enumerate}	

\label{ques:primitive-vs-irreducible}
\end{question}

The lower bound for the Perron--Frobenius degree in \cite{yazdi2020lower} is in terms of the two largest Galois conjugates in the complex plane. Therefore, the following is a natural question. 
\begin{question}
Let $d$ and $\rho$ denote, respectively, the algebraic degree and the spectral ratio. Is there an upper bound $B=B(d,\rho)$ for $d_{PF}^{irr}(\lambda)$ (respectively $d_{PF}(\lambda)$) where $\lambda$ is an arbitrary Perron number? 
\label{arithmetic-information}
\end{question}

We expect the above question to have a negative answer, meaning that in Theorem \ref{thm:upper-bound} the arithmetic information on $\mathcal{O}_\mathbb{K}$ cannot be overlooked. 
A unit algebraic integer $\lambda$ is \emph{bi-Perron} if all other Galois conjugates of $\lambda$ lie in the annulus $\{ c \in \mathbb{C} \hspace{1mm} | \hspace{1mm} \lambda^{-1} < |z| < \lambda \}$ except possibly for $\pm \lambda^{-1}$. See \cite{McMullen}.


\begin{question}
How can the bound in Theorem \ref{thm:upper-bound} be improved for special classes of algebraic integers such as totally real Perron numbers, Pisot numbers, Salem numbers, or bi-Perron numbers? 
\end{question}

In particular, we ask the following about Pisot numbers.

\begin{question}
Let $d$ denote the algebraic degree. Is there an upper bound $B = B(d)$ for $d_{PF}^{irr}(\lambda)$ (respectively $d_{PF}(\lambda)$) where $\lambda$ is an arbitrary Pisot number?
\end{question}

\bibliographystyle{alpha}
\bibliography{references-Perron}

\begin{thebibliography}{{Bay}06}

\bibitem[Ban93]{banaszczyk1993new}
Wojciech Banaszczyk.
\newblock New bounds in some transference theorems in the geometry of numbers.
\newblock {\em Mathematische Annalen}, 296(1):625--635, 1993.

\bibitem[{Bay}06]{BAYERFLUCKIGER2006305}
Eva {Bayer Fluckiger}.
\newblock Upper bounds for {E}uclidean minima of algebraic number fields.
\newblock {\em Journal of Number Theory}, 121(2):305 -- 323, 2006.

\bibitem[BH91]{boyle1991spectra}
Mike Boyle and David Handelman.
\newblock The spectra of nonnegative matrices via symbolic dynamics.
\newblock {\em Annals of Mathematics}, pages 249--316, 1991.

\bibitem[BL02]{boyle2002small}
Mike Boyle and Douglas Lind.
\newblock Small polynomial matrix presentations of non-negative matrices.
\newblock {\em Linear algebra and its applications}, 355:49--70, 2002.

\bibitem[Jar14]{jarvis2014algebraic}
Frazer Jarvis.
\newblock {\em Algebraic number theory}.
\newblock Springer, 2014.

\bibitem[KOR00]{kim2000spectra}
Ki~Kim, Nicholas Ormes, and Fred Roush.
\newblock The spectra of nonnegative integer matrices via formal power series.
\newblock {\em Journal of the American Mathematical Society}, 13(4):773--806,
  2000.

\bibitem[KR16]{kathuria2016conjectures}
Leetika Kathuria and Madhu Raka.
\newblock On conjectures of {M}inkowski and {W}oods for n= 9.
\newblock {\em Proceedings-Mathematical Sciences}, 126(4):501--548, 2016.

\bibitem[KR20]{kathuria2020conjectures}
Leetika Kathuria and Madhu Raka.
\newblock On conjectures of {M}inkowski and {W}oods for $ n= 10$.
\newblock {\em arXiv preprint arXiv:2009.09992}, 2020.

\bibitem[Lin84]{lind1984entropies}
Douglas~A Lind.
\newblock The entropies of topological {M}arkov shifts and a related class of
  algebraic integers.
\newblock {\em Ergodic Theory and Dynamical Systems}, 4(2):283--300, 1984.

\bibitem[LM21]{lind_marcus_2021}
Douglas Lind and Brian Marcus.
\newblock {\em An Introduction to Symbolic Dynamics and Coding}.
\newblock Cambridge Mathematical Library. Cambridge University Press, 2
  edition, 2021.

\bibitem[McM05]{mcmullen2005minkowski}
Curtis McMullen.
\newblock Minkowski’s conjecture, well-rounded lattices and topological
  dimension.
\newblock {\em Journal of the American Mathematical Society}, 18(3):711--734,
  2005.

\bibitem[McM14]{McMullen}
Curtis McMullen.
\newblock Slides for {D}ynamics and algebraic integers: Perspectives on
  {T}hurston’s last theorem.
\newblock {\em
  http://www.math.harvard.edu/~ctm/expositions/home/text/talks/cornell/2014/slides/slides.pdf},
  2014.

\bibitem[Mey72]{meyer1972algebraic}
Yves Meyer.
\newblock {\em Algebraic Numbers and Harmonic Analysis}, volume~2.
\newblock Elsevier, 1972.

\bibitem[Pis38]{pisot1938repartition}
Charles Pisot.
\newblock La r{\'e}partition modulo 1 et les nombres alg{\'e}briques.
\newblock {\em Annali della Scuola Normale Superiore di Pisa-Classe di
  Scienze}, 7(3-4):205--248, 1938.

\bibitem[RSW17]{regev2017}
Oded Regev, Uri Shapira, and Barak Weiss.
\newblock Counterexamples to a conjecture of {W}oods.
\newblock {\em Duke Math. J.}, 166(13):2443--2446, 09 2017.

\bibitem[Sal44]{salem1944remarkable}
Raphael Salem.
\newblock A remarkable class of algebraic integers. {P}roof of a conjecture of
  {V}ijayaraghavan.
\newblock {\em Duke Mathematical Journal}, 11(1):103--108, 1944.

\bibitem[Sch13]{schneider_2013}
Rolf Schneider.
\newblock {\em Convex Bodies: The Brunn–Minkowski Theory}.
\newblock Encyclopedia of Mathematics and its Applications. Cambridge
  University Press, 2 edition, 2013.

\bibitem[Sie44]{siegel1944algebraic}
Carl~Ludwig Siegel.
\newblock Algebraic integers whose conjugates lie in the unit circle.
\newblock {\em Duke Mathematical Journal}, 11(3):597--602, 1944.

\bibitem[Thu89]{thurston1989groups}
William~P Thurston.
\newblock Groups, tilings and finite state automata.
\newblock In {\em AMS Colloq. Lectures}, 1989.

\bibitem[Thu14]{thurston2014entropy}
William Thurston.
\newblock Entropy in dimension one.
\newblock {\em Frontiers in Complex Dynamics: In Celebration of John Milnor's
  80th Birthday}, 2014.

\bibitem[Yaz21]{yazdi2020lower}
Mehdi Yazdi.
\newblock Lower bound for the {P}erron–{F}robenius degrees of {P}erron
  numbers.
\newblock {\em Ergodic Theory and Dynamical Systems}, 41(4):1264--1280, 2021.

\end{thebibliography}

\end{document}